\theoremstyle{plain} 
\newtheorem{theorem}{\indent\sc Theorem}[section]
\newtheorem{lemma}[theorem]{\indent\sc Lemma}
\newtheorem{corollary}[theorem]{\indent\sc Corollary}
\theoremstyle{definition} 
\newtheorem{definition}[theorem]{\indent\sc Definition}
\newtheorem{example}[theorem]{\indent\sc Example}
\begin{document}

\title{\uppercase{Generalized Kenmotsu Manifolds}}
\author{ {\small {}
\bigskip} \\
\textsc{Aysel TURGUT\ VANLI and Ramazan SARI} }
\date{}
\maketitle

\begin{abstract}
In 1972, K. Kenmotsu studied a class of almost contact Riemannian manifolds.
Later, such a manifold was called a Kenmotsu manifold. This paper, we
studied Kenmotsu manifolds with $(2n+s)$-dimensional $s-$contact metric
manifold and this manifold, we have called generalized Kenmotsu manifolds.
Necessary and sufficient condition is given for an almost $s-$contact metric
manifold to be a generalized Kenmotsu manifold.We show that a generalized Kenmotsu
manifold is a locally warped product space. In addition, we study some curvature properties of
generalized Kenmotsu manifolds. Moreover, we show that the $\varphi $%
-sectional curvature of any semi-symmetric and projective semi-symmetric $%
(2n+s)$-dimensional generalized Kenmotsu manifold is $-s$.
\end{abstract}

\footnote{
2010 \textit{Mathematics Subject Classification}. Primary 53C15 ; Secondary 53C25, 53D10.} 
\footnote{
\textit{Key words and phrases}. Kenmotsu manifolds, metric $f$-manifolds, s-contact metric
manifolds, generalized Kenmotsu manifolds, semi-symmetric, ricci
semi-symmetric, projective semi-symmetric.} 
\footnote{
} 
\section*{Introduction}

In \textit{\cite{Y},} K.Yano introduced the notion of a $f-$structure on a
differentiable manifold $M$, i.e., a tensor fields $f$ of type $(1,1)$ and $%
rank$ $2n$ satisfying $f^{3}+f=0$ as a generalization of both (almost)
contact (for $s=1$) and (almost) complex structures (for $s=0$). $TM$ splits
into two complementary subbundles $\mathcal{L=}$ $Im\varphi $ and $\mathcal{%
M=}$ $ker\varphi $. The existence of which is equivalent to a reduction of
the structural group of the tangent bundle to $\mathit{U(n)\times O(s)}$
\textit{\cite{BL}}. H. Nakagawa in \textit{\cite{Nak}} and \textit{\cite{Nak2}} introduced the notion of globally framed f-manifolds (\textit{called f-manifolds}), later developed and studied by Goldberg and Yano \textit{\cite{G},} \textit{\cite{GY},} \textit{\cite{GY2}}. A wide class of globally frame $f$-manifolds was introduced in \textit{\cite{BL},} by Blair according to the following definition. A
metric $f$-structure is said to be a $K$-structure if the fundamental 2-form
$\Phi $, defined usually as $\Phi (X,Y)=g(X,\varphi Y)$, for any vector
fields $X$ and $Y$ on $M$, is closed and the normality condition holds, that
is; $[\varphi ,\varphi ]+2\sum_{i=1}^{s}d\eta ^{i}\otimes \xi _{i}=0$ where $%
[\varphi ,\varphi ]$ denotes the Nijenhuis torsion of $\varphi $. Some authors studeid $f$-structure \textit{\cite{BL2},} \textit{\cite{CFF},} \textit{\cite{YK}}. The
Riemannian connection $\nabla $ of a metric $f-$manifold satisfies the
following formula \textit{\cite{Bkitap},}
\begin{eqnarray}
2g((\nabla _{X}\varphi )Y,Z) &=&3d\Phi (X,\varphi Y,\varphi Z) -3d\Phi(X,Y,Z)+g(N^{1}(Y,Z),\varphi X)  \notag \\
&&+\overset{s}{\underset{i=1}{\sum }}\{N^{2}(Y,Z)\eta ^{i}(X)+2d\eta
^{i}(\varphi Y,X)\eta ^{i}(Z)-2d\eta ^{i}(\varphi Z,X)\eta ^{i}(Y)\},
\end{eqnarray}
where the tensor fields $N^{1}$ and $N^{2}$ are defined by $N^{1}=[\varphi
,\varphi ]+2\sum_{i=1}^{s}d\eta ^{i}\otimes \xi _{i},$ $N^{2}(X,Y)=(L_{_{%
\varphi X}}\eta ^{i})(Y)-(L_{_{\varphi Y}}\eta ^{i})(X)$ respectively, which
is by a simple computation can be rewritten as: $N^{2}(X,Y)=2d\eta
^{i}(\varphi X,Y)-2d\eta ^{i}(\varphi Y,X).$

Let $M$ be a $(2n+1)$ dimensional differentiable manifold. $M$ is called an
\textit{almost contact metric manifold} if $\varphi $ is $(1,1)$ type tensor
field, $\xi $ is vector field, $\eta $ is $1-$ form and $g$ is a compatible
Riemannian metric such that
\begin{equation}
\varphi ^{2}=-I+\eta \otimes \xi ,%
\begin{array}{cc}
&
\end{array}%
\eta (\xi )=1
\end{equation}%
\begin{equation}
g(\varphi X,\varphi Y)=g(X,Y)-\eta (X)\eta (Y)
\end{equation}%
for all $X,Y\in \Gamma (TM).$

In addition, we have%
\begin{equation}
\eta (X)=g(X,\xi ),%
\begin{array}{cc}
&
\end{array}%
\varphi (\xi )=0,%
\begin{array}{cc}
&
\end{array}%
\eta (\varphi )=0
\end{equation}%
for all $X,Y\in \Gamma (TM)$ \cite{Bkitap}.

\bigskip To study manifolds with negative curvature, Bishop and O'Neill
introduced the notion of warped product as a generalization of Riemannian
product \textit{\cite{Bis}}. In 1960's and 1970's, when almost contact
manifolds were studied as an odd dimensional counterpart of almost complex
manifolds, the warped product was used to make examples of almost contact
manifolds \textit{\cite{Tan}}. In addition, S. Tanno classified the
connected $(2n+1)$ dimensional almost contact manifold $M$ whose
automorphism group has maximum dimension $(n+1)^{2}$ in \textit{\cite{Tan}}$%
. $ For such a manifold, the sectional curvature of plane sections
containing $\xi $ is a constant, say $c$. Then there are three classes.

$i)$ $c>0$, M is homogeneous Sasakian manifold of constant holomorphic
sectional curvature.

$ii)$ $c=0$, M is the global Riemannian product of a line or a circle with a
K$\ddot{a}$hler manifold of constant holomorphic sectional curvature.

$iii)$ $c<0$, M is warped product space $%
\mathbb{R}
\times _{f}%
\mathbb{C}
^{n}.$ Kenmotsu obtained some tensorial equations to characterize manifolds
of the third case.

In 1972 , Kenmotsu abstracted the differential geometric properties of the
third case. In \textit{\cite{K},} Kenmotsu studied a class of almost contact
Riemannian manifold which satisfy the following two condition,%
\begin{eqnarray}
(\nabla _{X}\varphi )Y &=&-\eta (Y)\varphi X-g(X,\varphi Y)\xi \\
\nabla _{X}\xi &=&X-\eta (X)\xi  \notag
\end{eqnarray}

He showed normal an almost contact Riemannian manifold with $(5)$ but not
quasi Sasakian hence not Sasakian. He was to characterize warped product
space $L\times _{f}%
\mathbb{C}
E^{n}$ by an almost contact Riemannian manifold with $(5)$. Moreover, he
show that every point of an almost contact Riemannian manifold with $(5)$
has a neighborhood which is a warped $(-\epsilon ,\epsilon )\times _{f}V$
where $f(t)=ce^{t}$ and $V$ is K\"{a}hler.

In 1981 \textit{\cite{Jan},} Janssens and Vanhecke, an almost contact metric
manifold satisfiying this $(5)$ is called a Kenmotsu manifold. After this
definition, some authors studied Kenmotsu manifold \textit{\cite{DP}, \cite{JDP}, \cite{P}, \cite{Pr}.}

\bigskip The paper is organized as follows: after a preliminary basic
notions of $s-$contact metric manifolds theory, in Section 2, we introduced
generalized almost Kenmotsu manifolds and generalized Kenmotsu manifolds.
Necessary and sufficient condition is given for a $s-$contact metric
manifold to be a generalized Kenmotsu manifold. The warped product $%
L^{s}\times _{f}V^{2n}$ provides an example. In section 3, some curvature
properties are given. In section 4, we studied Ricci curvature tensor. In section 5,
we studied semi-symmetric properties of generalized Kenmotsu manifolds. We
show that the $\varphi $-sectional curvature of any semi-symmetric and
projective semi-symmetric $(2n+s)$-dimensional generalized Kenmotsu manifold
is $-s$.

\section{Preliminaries}

In \textit{\cite{GY},} a $(2n+s)-$dimensional differentiable manifold $M$ is
called metric $f-$manifold if there exist an $(1,1)$ type tensor field $%
\varphi $, $s$ vector fields $\xi _{1},\dots ,\xi _{s}$, $s$ $1$-forms $\eta
^{1},\dots ,\eta ^{s}$ and a Riemannian metric $g$ on $M$ such that%
\begin{equation}
\varphi ^{2}=-I+\overset{s}{\underset{i=1}{\sum }}\eta ^{i}\otimes \xi _{i},%
\begin{array}{cc}
\begin{array}{c}
\end{array}
&
\end{array}%
\eta ^{i}(\xi _{j})=\delta _{ij}
\end{equation}%
\begin{equation}
g(\varphi X,\varphi Y)=g(X,Y)-\overset{s}{\underset{i=1}{\sum }}\eta
^{i}(X)\eta ^{i}(Y),
\end{equation}%
for any $X,Y\in \Gamma (TM),$ $i,j\in \{1,\dots ,s\}$. In addition, we have%
\begin{equation}
\eta ^{i}(X)=g(X,\xi _{i}),%
\begin{array}{cc}
\begin{array}{c}
\end{array}
&
\end{array}%
g(X,\varphi Y)=-g(\varphi X,Y),%
\begin{array}{cc}
&
\end{array}%
\varphi \xi _{i} =0,%
\begin{array}{cc}
&
\end{array}%
\eta ^{i}\circ \varphi =0.
\end{equation}

Then, a $2$-form $\Phi $ is defined by $\Phi (X,Y)=g(X,\varphi Y)$, for any $%
X,Y\in \Gamma (TM)$, called the \textit{fundamental }$\mathit{2}$\textit{%
-form}.

In what follows, we denote by $\mathcal{M}$ the distribution spanned by the
structure vector fields $\xi _{1},\dots ,\xi _{s}$ and by $\mathcal{L}$ its
orthogonal complementary distribution. Then, $TM=\mathcal{L}\oplus \mathcal{M%
}$. If $X\in \mathcal{M}$ we have $\varphi X=0$ and if $X\in \mathcal{L}$ we
have $\eta ^{i}(X)=0$, for any $i\in \{1,\dots ,s\}$; that is, $\varphi
^{2}X=-X$.

In a metric $f$-manifold, special local orthonormal basis of vector fields
can be considered. Let $U$ be a coordinate neighborhood and $E_{1}$ a unit
vector field on $U$ orthogonal to the structure vector fields. Then, from $%
(6)-(8)$, $\varphi E_{1}$ is also a unit vector field on $U$ orthogonal
to $E_{1}$ and the structure vector fields. Next, if it is possible, let $%
E_{2}$ be a unit vector field on $U$ orthogonal to $E_{1}$, $\varphi E_{1}$
and the structure vector fields and so on. The local orthonormal basis
\begin{equation*}
\{E_{1},\dots ,E_{n},\varphi E_{1},\dots ,\varphi E_{n},\xi _{1},\dots ,\xi
_{s}\},
\end{equation*}%
so obtained is called an $f$\textit{-basis}. Moreover, a metric $f$-manifold
is \textit{normal} if
\begin{equation*}
\lbrack \varphi ,\varphi ]+2\underset{i=1}{\overset{s}{\sum }}d\eta
^{i}\otimes \xi _{i}=0,
\end{equation*}%
where $[\varphi ,\varphi ]$ is denoting the Nijenhuis tensor field
associated to $\varphi $.

In \textit{\cite{Van},} \ let $M$ a $(2n+s)-$dimensional metric $f-$%
manifold. If there exists $2$-form $\Phi $ such that $\eta ^{1}\wedge
...\wedge \eta ^{s}\wedge \Phi ^{n}\neq 0$ on $M$ then $M$ is called an
\textit{almost s-contact metric manifold.} A normal almost s-contact metric manifold is called an s-contact metric manifold.
\section{Generalized Kenmotsu Manifolds}
As is known in Kenmotsu manifold $dimker\varphi =1$, since $ker\varphi
=sp\{\xi \}$. It was to be $dimker\varphi >1$\ open question.\ Firstly in
2003, L. Bhatt and K. K. Dube introduced\textit{\ Kenmotsu }$s$\textit{%
-structure}; that is, an almost $s-$contact metric manifold $M$ is called a
Kenmotsu $s-$manifold if \textit{\ }
\begin{equation*}
(\nabla _{X}\varphi )Y=g(\varphi X,Y)\overset{s}{\underset{i=1}{\sum }}\xi
_{i}-\varphi X\overset{s}{\underset{i=1}{\sum }}\eta ^{i}(Y)
\end{equation*}%
for any $X,Y\in \Gamma (TM)$ \textit{\cite{Bd}}$.$ We will give their definition as a theorem in this paper.

Afterwards in 2006, M. Falcitelli and A.M. Pastore introduced \textit{
Kenmotsu f.pk-manifold}. In \textit{\cite{FP}}, \textit{\ }a metric $f.pk$%
-manifold $M$ of dimension $2n+s$, $s\geq 1$, with $f.pk-$structure which is
a metrik $f-$structure with parallelizable kernel. $\left( \varphi ,\xi
_{i},\eta ^{i},g\right) $ is said to be a Kenmotsu $f.pk$-manifold if it is
normal, the$1$-forms $\eta ^{i}$ are closed and $d\Phi =2\eta ^{1}\wedge
\Phi $ \textit{.} They assume that $d\Phi =2\eta ^{i}\wedge \Phi $ for all $%
i=1,2,...,s$ in the definition of Kenmotsu $f.pk-$manifold. So, they remark
that, since the $1$-forms $\eta ^{i}$ are linearly independent and $\eta
^{i} $ $\wedge \Phi =\eta ^{j}\wedge \Phi $ implies $\eta ^{i}=\eta ^{j}$,
then the condition on $d\Phi $ can be satisfied by a unique $\eta ^{i}$ and
they can assume that $d\Phi =2\eta ^{1}\wedge \Phi .$ It is clear that
authors were equated 1-forms $\eta ^{1},...,\eta ^{s}$, which dual of $\xi
_{1},...,\xi _{s}.$Thus, they studied unique 1-form $\eta ^{1}.$

In this paper, all $\eta ^{1},...,\eta ^{s}$ $1-$forms are unequaled at the
definition of generalized Kenmotsu manifolds.
\begin{definition}
\label{Def} Let $M$ be an almost $s-$contact metric manifold of dimension $(2n+s)$, $s\geq 1$, with $%
\left( \varphi ,\xi _{i},\eta ^{i},g\right) $ . $M$ is said to be a
generalized almost Kenmotsu manifold if for all $1\leq i\leq s,$ $1-$forms $%
\eta ^{i}$ are closed and $d\Phi =2\underset{i=1}{\overset{s}{\sum }}\eta
^{i}\wedge \Phi .$
A normal generalized almost Kenmotsu manifold $M$ is called a generalized
Kenmotsu manifold.
\end{definition}
Now, we construct an example of generalized Kenmotsu manifold.

\begin{example}
We consider $(2n+s)-$dimensional manifold
\begin{equation*}
M=\left\{ (x_{1},...,x_{n},y_{1},...,y_{n},z_{1,}...,z_{s})\in \mathbb{R}%
^{2n+s}:\sum_{\alpha =1}^{s}{z_{\alpha }}\neq 0\right\}
\end{equation*}%
We choose the vector fields%
\begin{equation*}
X_{i}=e^{-\sum\limits_{\alpha =1}^{s}z_{\alpha }}\frac{\partial }{\partial
x_{i}},%
Y_{i}=e^{-\sum\limits_{\alpha =1}^{s}z_{\alpha }}\frac{\partial }{\partial
y_{i}},%
\xi _{\alpha }=\frac{\partial }{\partial z_{\alpha }},%
1\leq i\leq n,%
1\leq \alpha \leq s
\end{equation*}%
which are linearly indepent at each point of $M.$ Let $g$ be the Riemannian
metric defined by%
\begin{equation*}
g=e^{2\sum\limits_{\alpha =1}^{s}z_{\alpha }}\left[ \sum\limits_{i=1}^{n}%
\left( dx_{i}\otimes dx_{i}+dy_{i}\otimes dy_{i}\right) \right]
+\sum\limits_{\alpha =1}^{s}\eta ^{\alpha }\otimes \eta ^{\alpha }.
\end{equation*}%
Hence, $\left\{ X_{1},...,X_{n},Y_{1},...,Y_{n},\xi _{1},...,\xi
_{s}\right\} $ is an orthonormal basis. Thus, $\eta ^{\alpha }$ be the $1-$%
form defined by $\eta ^{\alpha }\left( X\right) =g(X,\xi _{\alpha }),%
\begin{array}{c}
\end{array}%
\alpha =1,...,s$ for any vector field $X$ on $TM.$ We defined the $(1,1)$
tensor field $\varphi $ as
\begin{equation*}
\varphi \left( X_{i}\right) =Y_{i},%
\varphi \left( Y_{i}\right) =-X_{i},%
\varphi \left( \xi _{\alpha }\right) =0,%
1\leq i\leq n,%
1\leq \alpha \leq s.
\end{equation*}%
The linearity property of $\varphi $ and $g$ yields that

\begin{eqnarray*}
\eta ^{\alpha }\left( \xi _{\beta }\right) &=&\delta _{\alpha \beta },%
\varphi ^{2}X=-X+\sum\limits_{\alpha =1}^{s}\eta ^{\alpha }\left( X\right)
\xi _{\alpha }, \\
g\left( \varphi X,\varphi Y\right) &=&g\left( X,Y\right)
-\sum\limits_{\alpha =1}^{s}\eta ^{\alpha }\left( X\right) \eta ^{\alpha
}\left( Y\right) ,
\end{eqnarray*}%
for any vector fields $X$, $Y$ on $M.$ Therefore,$(M,\varphi ,\xi _{\alpha
},\eta ^{\alpha },g)$ defines a metric $f-$manifold. We have $\Phi
(X_{i,},Y_{i})=-1$ and others are zero. Therefore, the essential non-zero
component of $\Phi $ is%
\begin{equation*}
\Phi (\frac{\partial }{\partial x_{i}},\frac{\partial }{\partial y_{i}})=g(%
\frac{\partial }{\partial x_{i}},\varphi \frac{\partial }{\partial y_{i}}%
)=-e^{2\sum\limits_{\alpha =1}^{s}z_{\alpha }}
\end{equation*}%
and hence, we have
\begin{equation*}
\Phi =-e^{2\sum\limits_{\alpha =1}^{s}z_{\alpha }}\sum_{i=1}^{n}dx_{i}\wedge
dy_{i}
\end{equation*}%
Therefore, we get $\eta ^{1}\wedge ...\wedge \eta ^{s}\wedge \Phi ^{n}\neq 0$
on $M$. Thus $(M,\varphi ,\xi _{\alpha },\eta ^{\alpha },g)$ is almost $s-$%
contact manifold. Consequently, the exterior derivative $d\Phi $ is given by
\begin{equation*}
d\Phi =2\sum\limits_{\alpha =1}^{s}dz_{\alpha }\wedge
(-e^{2\sum\limits_{\alpha =1}^{s}z_{\alpha }})\sum_{i=1}^{n}dx_{i}\wedge
dy_{i}.
\end{equation*}%
Therefore, $(M,\varphi ,\xi _{\alpha },\eta ^{\alpha },g)$ is a generalized
almost Kenmotsu manifold. It can be seen that $\ (M,\varphi ,\xi _{\alpha
},\eta ^{\alpha },g)$ is normal. So, it is a\ generalized Kenmotsu manifold.
Moreover, we get
\begin{eqnarray*}
\left[ X_{i},\xi _{\alpha }\right] &=&X_{i},%
\begin{array}{cc}
&
\end{array}%
\left[ Y_{i},\xi _{\alpha }\right] =Y_{i}, \\
\left[ X_{i},X_{j}\right] &=&0,%
\begin{array}{cc}
&
\end{array}%
\left[ X_{i},Y_{i}\right] =0,%
\begin{array}{cc}
&
\end{array}%
\left[ X_{i},Y_{j}\right] =0 \\
\left[ Y_{i},Y_{j}\right] &=&0,%
\begin{array}{cc}
&
\end{array}%
1\leq i,j\leq n,1\leq {\alpha }\leq s.
\end{eqnarray*}%
The Riemannian connection $\nabla $ of the metric $g$ is given by
\begin{eqnarray*}
2g(\nabla _{X}Y,Z) &=&Xg(Y,Z)+Yg(Z,X)-Zg(X,Y) \\
&&+g(\left[ X,Y\right] ,Z)-g(\left[ Y,Z\right] ,X)+g(\left[ Z,X\right] ,Y).
\end{eqnarray*}%
Using the Koszul's formula, we obtain
\begin{eqnarray*}
\nabla _{X_{i}}X_{i} &=&\sum_{\alpha =1}^{s}{\xi _{\alpha }},%
\begin{array}{c}
\end{array}%
\nabla _{Y_{i}}Y_{i}=\sum_{\alpha =1}^{s}{\xi _{\alpha }}, \\
\nabla _{X_{i}}X_{j} &=&\nabla _{Y_{i}}Y_{j}=\nabla _{X_{i}}Y_{i}=\nabla
_{X_{i}}Y_{j}=0 \\
\nabla _{X_{i}}\xi _{\alpha } &=&X_{i},%
\begin{array}{c}
\end{array}%
\nabla _{Y_{i}}\xi _{\alpha }=Y_{i},1\leq i,j\leq n,1\leq {\alpha }\leq s.
\end{eqnarray*}
\end{example}

We construct an example of generalized Kenmotsu manifold for $7-$dimensional.

\begin{example}
Let $n=2$ and $s=3$. The vector fields%
\begin{equation*}
e_{1}=f_{1}(z_{1},z_{2},z_{3})\frac{\partial }{\partial x_{1}}%
+f_{2}(z_{1},z_{2},z_{3})\frac{\partial }{\partial y_{1}},
\end{equation*}%
\begin{equation*}
e_{2}=-f_{2}(z_{1},z_{2},z_{3})\frac{\partial }{\partial x_{1}}%
+f_{1}(z_{1},z_{2},z_{3})\frac{\partial }{\partial y_{1}},
\end{equation*}%
\begin{equation*}
e_{3}=f_{1}(z_{1},z_{2},z_{3})\frac{\partial }{\partial x_{2}}%
+f_{2}(z_{1},z_{2},z_{3})\frac{\partial }{\partial y_{2}},
\end{equation*}%
\begin{equation*}
e_{4}=-f_{2}(z_{1},z_{2},z_{3})\frac{\partial }{\partial x_{2}}%
+f_{1}(z_{1},z_{2},z_{3})\frac{\partial }{\partial y_{2}},
\end{equation*}%
\begin{equation*}
e_{5}=\frac{\partial }{\partial z_{1}},e_{6}=\frac{\partial }{\partial z_{2}}%
,e_{7}=\frac{\partial }{\partial z_{3}}
\end{equation*}%
where $f_{1}$ and $f_{2}$ are given by
\begin{eqnarray*}
f_{1}(z_{1},z_{2},z_{3}) &=&c_{2}e^{-(z_{1}+z_{2}+z_{3})}\cos
(z_{1}+z_{2}+z_{3})-c_{1}e^{-(z_{1}+z_{2}+z_{3})}\sin (z_{1}+z_{2}+z_{3}), \\
f_{2}(z_{1},z_{2},z_{3}) &=&c_{1}e^{-(z_{1}+z_{2}+z_{3})}\cos
(z_{1}+z_{2}+z_{3})+c_{2}e^{-(z_{1}+z_{2}+z_{3})}\sin (z_{1}+z_{2}+z_{3})
\end{eqnarray*}%
for nonzero constant $c_{1},c_{2}.$ It is obvious that $\left\{
e_{1},e_{2},e_{3},e_{4},e_{5},e_{6},e_{7}\right\} $ are linearly independent
at each point of $M$. Let $g$ be the Riemannian metric given by
\begin{equation*}
g=\frac{1}{f_{1}^{2}+f_{2}^{2}}\sum_{i=1}^{2}(dx_{i}\otimes
dx_{i}+dy_{i}\otimes dy_{i})+dz_{1}\otimes dz_{1}+dz_{2}\otimes
dz_{2}+dz_{3}\otimes dz_{3},
\end{equation*}%
where $\left\{ x_{1},y_{1},x_{2},y_{2},z_{1},z_{2},z_{3}\right\} $ are
standard coordinates in $%
\mathbb{R}
^{7}$. Let $\eta ^{1}$, $\eta ^{2}$ and $\eta ^{3}$ be the $1-$form defined
by $\eta ^{1}(X)=g(X,e_{5})$, $\eta ^{2}(X)=g(X,e_{6})$ and $\eta
^{3}(X)=g(X,e_{7})$, respectively, for any vector field $X$ on $M$ and $\phi
$ be the $(1,1)$ tensor field defined by
\begin{eqnarray*}
\varphi (e_{1}) &=&e_{2},%
\begin{array}{cc}
\begin{array}{c}
\end{array}
&
\end{array}%
\varphi (e_{2})=-e_{1},%
\begin{array}{cc}
&
\end{array}%
\varphi (e_{3})=e_{4},%
\begin{array}{cc}
\begin{array}{c}
\end{array}
&
\end{array}%
\varphi (e_{4})=-e_{3}, \\
\varphi (e_{5} &=&\xi _{1})=0,%
\begin{array}{cc}
\begin{array}{c}
\end{array}
&
\end{array}%
\varphi (e_{6}=\xi _{2})=0,%
\begin{array}{cc}
\begin{array}{c}
\end{array}
&
\end{array}%
\varphi (e_{7}=\xi _{3})=0.
\end{eqnarray*}%
Therefore, the essential non-zero component of $\Phi $ is%
\begin{equation*}
\Phi (\frac{\partial }{\partial x_{i}},\frac{\partial }{\partial y_{i}})=-%
\frac{1}{f_{1}^{2}+f_{2}^{2}}=-\frac{2e^{2(z_{1}+z_{2}+z_{3})}}{%
c_{1}^{2}+c_{2}^{2}},%
\begin{array}{cc}
&
\end{array}%
i=1,2
\end{equation*}%
and hence
\begin{equation*}
\Phi =-\frac{2e^{2(z_{1}+z_{2}+z_{3})}}{c_{1}^{2}+c_{2}^{2}}%
\sum_{i=1}^{2}dx_{i}\wedge dy_{i}.
\end{equation*}%
Thus, we have $\eta ^{1}\wedge ...\wedge \eta ^{s}\wedge \Phi ^{n}\neq 0$ on
$M$. Consequently, the exterior derivative $d\Phi $ is given by
\begin{equation*}
d\Phi =-\frac{4e^{2(z_{1}+z_{2}+z_{3})}}{c_{1}^{2}+c_{2}^{2}}%
(dz_{1}+dz_{2}+dz_{3})\wedge \sum_{i=1}^{2}dx_{i}\wedge dy_{i}.
\end{equation*}%
Since $\eta ^{1}=dz_{1}$, $\eta ^{2}=dz_{2}$ and $\eta ^{3}=dz_{3},$ we find%
\begin{equation*}
d\Phi =2(\eta ^{1}+\eta ^{2}+\eta ^{3})\wedge \Phi .
\end{equation*}%
In addition, Nijenhuis tersion of $\varphi $ is equal to zero.
\end{example}

\begin{theorem}
Let $\left( M,\varphi ,\xi _{i},\eta ^{i},g\right)$ be an almost $s$-contact metric manifold. $M$ is a generalized Kenmotsu manifold if and only if%
\begin{equation}
\left( \nabla _{X}\varphi \right) Y=\underset{i=1}{\overset{s}{\sum }}%
\left\{ g(\varphi X,Y)\xi _{i}-\eta ^{i}(Y)\varphi X\right\}
\end{equation}%
for all $X,Y\in \Gamma (TM),$ $i\in \left\{ 1,2,...,s\right\} ,$ where $%
\nabla $ is Riemannian connection on M.
\end{theorem}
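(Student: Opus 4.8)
The plan is to prove the two implications separately: the forward one from the master identity (1), and the converse by computing $\nabla\varphi$, $\nabla\xi_i$ and the relevant exterior derivatives directly from (9). Throughout I would use the structural relations (6)--(8), in particular $\eta^i\circ\varphi=0$, $\varphi\xi_i=0$, and the self-adjointness of $\varphi^2$.

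For the implication that a generalized Kenmotsu manifold satisfies (9): by definition $M$ is normal and every $\eta^i$ is closed. Closedness gives $d\eta^i=0$, hence also $N^2=0$ since $N^2(X,Y)=2d\eta^i(\varphi X,Y)-2d\eta^i(\varphi Y,X)$, while normality gives $N^1=0$. Substituting $N^1=N^2=0$ and $d\eta^i=0$ into (1) collapses it to $2g((\nabla_X\varphi)Y,Z)=3d\Phi(X,\varphi Y,\varphi Z)-3d\Phi(X,Y,Z)$. I would then insert $d\Phi=2\sum_i\eta^i\wedge\Phi$ and expand the wedge products, using $\eta^i\circ\varphi=0$ to annihilate every term in which a $\varphi$ sits inside some $\eta^i$, together with the identity $\Phi(\varphi Y,\varphi Z)=\Phi(Y,Z)$ that follows from (6)--(8). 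Collecting the surviving terms and rewriting $g(\varphi X,Y)=-\Phi(X,Y)$ matches the expression $2g(\sum_i\{g(\varphi X,Y)\xi_i-\eta^i(Y)\varphi X\},Z)$; since $Z$ is arbitrary, this is (9).

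For the converse, assume (9). Normality is the cleanest part: in the standard expansion $[\varphi,\varphi](X,Y)=(\nabla_{\varphi X}\varphi)Y-(\nabla_{\varphi Y}\varphi)X+\varphi((\nabla_Y\varphi)X)-\varphi((\nabla_X\varphi)Y)$ of the Nijenhuis torsion I substitute (9); using $\varphi\xi_i=0$ and the symmetry of $\varphi^2$, all terms cancel and $[\varphi,\varphi]=0$. For the fundamental form I use $(\nabla_X\Phi)(Y,Z)=g(Y,(\nabla_X\varphi)Z)$ (valid since $\nabla g=0$), so $d\Phi(X,Y,Z)$ is the cyclic sum of $(\nabla_X\Phi)(Y,Z)$; feeding in (9) and simplifying with $\eta^i\circ\varphi=0$ reproduces $d\Phi=2\sum_i\eta^i\wedge\Phi$.

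The remaining and, I expect, genuinely delicate point is the closedness $d\eta^i=0$ (which, combined with $[\varphi,\varphi]=0$, also yields $N^1=0$). Pairing (9) with $\xi_i$ and using $g((\nabla_X\varphi)Y,\xi_i)=-g(\varphi Y,\nabla_X\xi_i)$ gives $g(\varphi Y,\nabla_X\xi_i)=g(X,\varphi Y)$ for all $Y$, so the $\mathcal L=\mathrm{Im}\,\varphi$ component of $\nabla_X\xi_i$ agrees with that of $X$, whence $\nabla_X\xi_i=-\varphi^2X+\sum_k g(\nabla_X\xi_i,\xi_k)\xi_k$. Since $d\eta^i(X,Y)=g(Y,\nabla_X\xi_i)-g(X,\nabla_Y\xi_i)$ and the $-\varphi^2$ part is symmetric, closedness is equivalent to the vanishing of the skew coefficients $A_{ik}(X)=g(\nabla_X\xi_i,\xi_k)$, i.e. to $\nabla_X\xi_i=-\varphi^2X$. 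This is the crux: because $\varphi$ kills $\mathcal M=\ker\varphi$, condition (9) controls $\nabla\varphi$ but sees nothing of the $\mathcal M$-to-$\mathcal M$ block $A_{ik}$ of the connection, so ruling out this internal rotation of the frame $\xi_1,\dots,\xi_s$ is precisely the step that cannot be read off (9) alone and must instead be forced from the almost $s$-contact hypothesis. Once $A_{ik}=0$ is secured, $d\eta^i=0$ and normality follow at once, and together with the $d\Phi$ identity the structure is generalized Kenmotsu.
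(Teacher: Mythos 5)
Your forward direction coincides with the paper's: both set $N^{1}=N^{2}=0$ and $d\eta ^{i}=0$ in the general formula $(1)$ and then expand $d\Phi =2\sum_{i}\eta ^{i}\wedge \Phi $; this part is fine. In the converse, your derivations of $[\varphi ,\varphi ]=0$ and of $d\Phi =2\sum_{i}\eta ^{i}\wedge \Phi $ from $(9)$ are also sound and essentially what the paper does (you route the second through $(\nabla _{X}\Phi )(Y,Z)=g(Y,(\nabla _{X}\varphi )Z)$ rather than the Koszul-type expansion of $3d\Phi $, which is equivalent).

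The genuine gap is the one you yourself flag and then do not close: the closedness of the $\eta ^{i}$. As you correctly observe, $(9)$ only determines the $\mathcal{L}$-component of $\nabla _{X}\xi _{i}$, namely $\varphi \nabla _{X}\xi _{i}=\varphi X$, so that $\nabla _{X}\xi _{i}=-\varphi ^{2}X+\sum_{k}A_{ik}(X)\xi _{k}$ with $A_{ik}=-A_{ki}$; only the diagonal entries $A_{ii}=0$ come for free (from $g(\xi _{i},\xi _{i})=1$), and $d\eta ^{i}=0$ amounts to killing the off-diagonal block. Declaring that this ``must be forced from the almost $s$-contact hypothesis'' is not an argument: that hypothesis is only the non-degeneracy $\eta ^{1}\wedge \cdots \wedge \eta ^{s}\wedge \Phi ^{n}\neq 0$, and it is not apparent how it constrains $A_{ik}$ when $s\geq 2$. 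Until you either supply that argument or extract from $(9)$ an additional identity forcing $A_{ik}=0$, the converse is unproved. It is worth noting that the paper's own proof silently commits exactly this step: it passes from $\varphi ^{2}\nabla _{X}\xi _{j}=\varphi ^{2}X$ to $\nabla _{X}\xi _{j}=-\varphi ^{2}X$, which is precisely the unproved assertion $\eta ^{k}(\nabla _{X}\xi _{j})=0$ for $k\neq j$. So you have correctly located the weak point of the converse, but your proposal does not repair it.
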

\begin{proof}
Let $M$ be a generalized Kenmotsu manifold. From $(1)$, $(6),(7)$ and $%
(8)$ \ for all $X,Y\in \Gamma (TM),$ we have%
\begin{eqnarray*}
g\left( \left( \nabla _{X}\varphi \right) Y,Z\right) &=&3\left\{ \underset{%
i=1}{\overset{s}{\sum }}(\eta ^{i}\wedge \Phi )(X,\varphi Y,\varphi Z)-%
\underset{i=1}{\overset{s}{\sum }}(\eta ^{i}\wedge \Phi )(X,Y,Z)\right\} \\
&=&\underset{i=1}{\overset{s}{3\sum }}\{\frac{1}{3}(-\eta ^{i}(X)\Phi
(\varphi Y,\varphi Z)+\eta ^{i}(\varphi Y)\Phi (\varphi Z,X)+\eta
^{i}(\varphi Z)\Phi (X,\varphi Y)) \\
&&-\frac{1}{3}(-\eta ^{i}(X)\Phi (Y,Z)+\eta ^{i}(Y)\Phi (Z,X)+\eta
^{i}(Z)\Phi (X,Y))\} \\
&=&\underset{i=1}{\overset{s}{\sum }}\left\{ -\eta ^{i}(X)g(\varphi
Y,\varphi ^{2}Z)+\eta ^{i}(X)g(Y,\varphi Z)-\eta ^{i}(Y)g(Z,\varphi X)-\eta
^{i}(Z)g(X,\varphi Y)\right\} \\
&=&\underset{i=1}{\overset{s}{\sum }}\left\{ -\eta ^{i}(Y)g(Z,\varphi
X)-\eta ^{i}(Z)g(X,\varphi Y)\right\} \\
&=&g(\underset{i=1}{\overset{s}{\sum }}\left\{ g(\varphi X,Y)\xi _{i}-\eta
^{i}(Y)\varphi X\right\} ,Z).
\end{eqnarray*}

Conversely, firstly, using $(9)$ and $(8)$, we get%
\begin{equation*}
\varphi \nabla _{X}\xi _{j}=\underset{i=1}{\overset{s}{-\sum }}\left\{
g(\varphi X,\xi _{j})\xi _{i}-\eta ^{i}(\xi _{j})\varphi X\right\}
\end{equation*}%
hence, we\ get%
\begin{equation*}
\varphi ^{2}\nabla _{X}\xi _{j}=\varphi ^{2}X.
\end{equation*}%
Therefore, we have%
\begin{equation*}
\nabla _{X}\xi _{j}=-\varphi ^{2}X.
\end{equation*}%
On the other hand, we get
\begin{equation*}
d\eta ^{i}(X,Y)=\frac{1}{2}\{g(Y,-\varphi ^{2}X)-g(X,-\varphi ^{2}Y)\}=0
\end{equation*}%
for all $X,Y\in \Gamma (TM),$ $i\in \left\{ 1,2,...,s\right\} .$ In
addition, we know%
\begin{eqnarray*}
3d\Phi (X,Y,Z) &=&Xg(Y,\varphi Z)-Yg(X,\varphi Z)-Zg(X,\varphi
Y)-g([X,Y],\varphi Z)\\&&+g([X,Z],\varphi Y)-g([Y,Z],\varphi X) \\
&=&g(Y,\nabla _{X}\varphi Z-\varphi \nabla _{X}Z)-g(X,\nabla _{Y}\varphi
Z-\varphi \nabla _{Y}Z)+g(X,\nabla _{Z}\varphi Y-\varphi \nabla _{Z}Y).
\end{eqnarray*}%
From hypothesis, we have%
\begin{eqnarray*}
3d\Phi (X,Y,Z) &=&\overset{s}{\underset{i=1}{\sum }}\{g(\varphi X,Z)g(Y,\xi
_{i})-\eta ^{i}(Z)g(Y,\varphi X)-g(\varphi Y,Z)g(X,\xi _{i})+\eta
^{i}(Z)g(X,\varphi Y) \\
&&+g(\varphi Z,Y)g(X,\xi _{i})-\eta ^{i}(Y)g(X,\varphi Z)\} \\
&=&2\overset{s}{\underset{i=1}{\sum }}\{\Phi (Z,X)\eta ^{i}(Y)+\Phi
(X,Y)\eta ^{i}(Z)+\Phi (Y,Z)\eta ^{i}(X)\}.
\end{eqnarray*}%
Then, we have,%
\begin{equation*}
d\Phi =2\underset{i=1}{\overset{s}{\sum }}\eta ^{i}\wedge \Phi .
\end{equation*}%
Moreover, the Nijenhuis torsion of $\varphi $ is obtained%
\begin{eqnarray*}
N_{\varphi }(X,Y) &=&\varphi \left( -\overset{s}{\underset{i=1}{\sum }}%
\{g(\varphi X,Y)\xi _{i}-\eta ^{i}(Y)\varphi X\}+\overset{s}{\underset{i=1}{%
\sum }}\{g(\varphi Y,X)\xi _{i}-\eta ^{i}(X)\varphi Y\}\right) \\
&&+\overset{s}{\underset{i=1}{\sum }}\{g(\varphi ^{2}X,Y)\xi _{i}-\eta
^{i}(Y)\varphi ^{2}X\}-\overset{s}{\underset{i=1}{\sum }}\{g(\varphi
^{2}Y,X)\xi _{i}-\eta ^{i}(X)\varphi ^{2}Y\} \\
&=&0.
\end{eqnarray*}%
Hence, we have%
\begin{equation*}
\lbrack \varphi ,\varphi ]+2\sum_{i=1}^{s}d\eta ^{i}\otimes \xi _{i}=0.
\end{equation*}%
The proof is completed.
\end{proof}

\begin{corollary}
Let $M$ be a $(2n+s)$-dimensional generalized Kenmotsu manifold with
structure $\left( \varphi ,\xi _{i},\eta ^{i},g\right) .$ Then we have%
\begin{equation}
\nabla _{X}\xi _{j}=-\varphi ^{2}X
\end{equation}%
for all $X\in \Gamma (TM),i,$ $j\in \left\{ 1,2,...,s\right\} .$
\end{corollary}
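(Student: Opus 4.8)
The plan is to read the covariant derivative of the structure fields straight off the characterization $(9)$ of the Theorem, which I am free to assume. First I would specialize $(9)$ to $Y=\xi_j$. Since $\varphi\xi_j=0$ by $(8)$, the product rule gives $(\nabla_X\varphi)\xi_j=\nabla_X(\varphi\xi_j)-\varphi(\nabla_X\xi_j)=-\varphi(\nabla_X\xi_j)$. On the other hand, evaluating the right-hand side of $(9)$ at $Y=\xi_j$ and using $g(\varphi X,\xi_j)=\eta^j(\varphi X)=0$ (because $\eta^j\circ\varphi=0$) together with $\eta^i(\xi_j)=\delta_{ij}$ from $(6)$, every term collapses and leaves $(\nabla_X\varphi)\xi_j=-\varphi X$. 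Comparing the two expressions yields $\varphi(\nabla_X\xi_j)=\varphi X$, and applying $\varphi$ once more gives
\begin{equation*}
\varphi^2(\nabla_X\xi_j)=\varphi^2 X.
\end{equation*}

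Next I would recover $\nabla_X\xi_j$ itself by expanding $\varphi^2=-I+\sum_{i}\eta^i\otimes\xi_i$ from $(6)$, which turns the previous identity into
\begin{equation*}
\nabla_X\xi_j=-\varphi^2 X+\sum_{i=1}^{s}\eta^i(\nabla_X\xi_j)\,\xi_i .
\end{equation*}
Thus the assertion $(10)$ is equivalent to showing that $\nabla_X\xi_j$ has no component along $\mathcal{M}$, that is $\eta^i(\nabla_X\xi_j)=0$ for every $i$. This is exactly the step that the bare relation $\varphi^2(\nabla_X\xi_j)=\varphi^2 X$ does \emph{not} settle, since $\varphi^2$ is not injective on $TM$; it is where I expect the real work to lie and where the normality hypothesis, specifically the closedness of the $\eta^i$, must enter.

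To kill the $\mathcal{M}$-component I would use that, on a generalized Kenmotsu manifold, each $\eta^i$ is closed by the Definition. Because $\nabla$ is metric and torsion-free and $\eta^j=g(\,\cdot\,,\xi_j)$, the condition $d\eta^j=0$ translates into the symmetry $g(\nabla_X\xi_j,Y)=g(\nabla_Y\xi_j,X)$ for all $X,Y$. Specializing to structure fields and writing $T_{ijk}:=g(\nabla_{\xi_i}\xi_j,\xi_k)$, this symmetry gives $T_{ijk}=T_{kji}$, while differentiating the constants $g(\xi_j,\xi_k)=\delta_{jk}$ gives the antisymmetry $T_{ijk}=-T_{ikj}$; alternating these two relations (the standard permutation argument) forces $T_{ijk}=0$. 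Since $\varphi^2(\nabla_{\xi_i}\xi_j)=\varphi^2\xi_i=0$ already shows $\nabla_{\xi_i}\xi_j\in\mathcal{M}$, vanishing of all its components $T_{ijk}$ yields $\nabla_{\xi_i}\xi_j=0$. Finally, invoking the closedness symmetry once more I would rewrite $\eta^i(\nabla_X\xi_j)=g(\nabla_X\xi_j,\xi_i)=g(\nabla_{\xi_i}\xi_j,X)=0$, so the $\mathcal{M}$-component vanishes and $\nabla_X\xi_j=-\varphi^2 X$, which is $(10)$.
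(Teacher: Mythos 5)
Your proposal is correct, and it begins exactly where the paper does: the paper proves this identity inside the converse half of Theorem~2.4 by setting $Y=\xi_j$ in $(9)$, obtaining $\varphi\nabla_X\xi_j=\varphi X$, hence $\varphi^2\nabla_X\xi_j=\varphi^2X$, and then simply writing ``Therefore, $\nabla_X\xi_j=-\varphi^2X$.'' The difference is that you explicitly flag and repair the step the paper glosses over: since $\varphi^2=-I+\sum_i\eta^i\otimes\xi_i$ has kernel $\mathcal{M}$, the identity $\varphi^2\nabla_X\xi_j=\varphi^2X$ only gives $\nabla_X\xi_j=-\varphi^2X+\sum_i\eta^i(\nabla_X\xi_j)\xi_i$, and one must still show that the $\mathcal{M}$-component vanishes. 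Your argument for this is sound: for the corollary (as opposed to the converse direction of the theorem) the hypothesis that $M$ is generalized Kenmotsu makes each $\eta^i$ closed by Definition~2.1, closedness plus metric compatibility gives the symmetry $g(\nabla_X\xi_j,Y)=g(\nabla_Y\xi_j,X)$, the standard permutation argument on $T_{ijk}=g(\nabla_{\xi_i}\xi_j,\xi_k)$ (symmetric in the outer indices, antisymmetric in the last two) forces $T_{ijk}=0$ and hence $\nabla_{\xi_i}\xi_j=0$, and then $\eta^i(\nabla_X\xi_j)=g(\nabla_{\xi_i}\xi_j,X)=0$. What your version buys is a genuinely complete proof of the corollary as stated; what the paper's version buys is brevity, at the cost of an unjustified cancellation of the $\mathcal{M}$-component (which, in the converse direction of the theorem where $d\eta^i=0$ is not yet available, is a real gap rather than a mere omission).
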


\begin{lemma}
Let $M$ be a $(2n+s)$-dimensional generalized Kenmotsu manifold with
structure $\left( \varphi ,\xi _{i},\eta ^{i},g\right) .$ Then we have%
\begin{equation*}
i)\nabla _{\xi _{j}}\varphi =0,%
\begin{array}{cc}
\begin{array}{c}
\end{array}
&
\end{array}%
\nabla _{\xi _{j}}\xi _{i}=0
\end{equation*}%
\begin{equation*}
ii)(L_{\xi _{i}}\varphi )X=0,%
\begin{array}{cc}
\begin{array}{c}
\end{array}
&
\end{array}%
(L_{\xi _{i}}\eta ^{j})X=0
\end{equation*}%
\begin{equation}
iii)(L_{\xi _{i}}g)(X,Y)=2\{g(X,Y)-\sum_{i=1}^{s}\eta ^{i}(X)\eta ^{i}(Y)\}
\end{equation}%
for all $X\in \Gamma (TM),$ $i,j\in \left\{ 1,2,...,s\right\} .$
\end{lemma}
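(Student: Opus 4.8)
The plan is to derive all three statements from the two identities already established---the covariant derivative formula $(9)$ and the Corollary's $\nabla_X\xi_j = -\varphi^2 X$ in $(10)$---together with the pointwise relations $(6)$--$(8)$. Two algebraic facts will be used throughout: applying $\varphi$ to $(6)$ and using $\varphi\xi_k = 0$ gives $\varphi^3 = -\varphi$, while $(6)$ lets me rewrite $(10)$ as $\nabla_X\xi_j = X - \sum_{k=1}^s\eta^k(X)\xi_k$. For part $i)$, setting $X = \xi_j$ in $(10)$ and using $\varphi\xi_j = 0$ gives $\nabla_{\xi_j}\xi_i = -\varphi^2\xi_j = 0$; and setting $X = \xi_j$ in $(9)$, every term on the right carries the factor $\varphi\xi_j = 0$, so $(\nabla_{\xi_j}\varphi)Y = 0$ for all $Y$, i.e. $\nabla_{\xi_j}\varphi = 0$.

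For part $ii)$, I would convert the Lie derivatives into covariant ones using that $\nabla$ is torsion-free and metric. The standard identity gives $(L_{\xi_i}\varphi)X = (\nabla_{\xi_i}\varphi)X + \varphi\nabla_X\xi_i - \nabla_{\varphi X}\xi_i$; the first term vanishes by part $i)$, and by $(10)$ together with $\varphi^3 = -\varphi$ both $\varphi\nabla_X\xi_i = -\varphi^3 X = \varphi X$ and $\nabla_{\varphi X}\xi_i = -\varphi^2(\varphi X) = \varphi X$, so they cancel and $(L_{\xi_i}\varphi)X = 0$. For the one-form, writing $\eta^j(\,\cdot\,) = g(\,\cdot\,,\xi_j)$ and using metric compatibility with $\nabla_{\xi_i}\xi_j = 0$ from part $i)$, the Lie derivative collapses to $(L_{\xi_i}\eta^j)X = g(\nabla_X\xi_i,\xi_j)$; inserting $\nabla_X\xi_i = X - \sum_k\eta^k(X)\xi_k$ and $g(\xi_k,\xi_j) = \delta_{kj}$ yields $\eta^j(X) - \eta^j(X) = 0$.

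For part $iii)$, I would apply the standard formula $(L_{\xi_i}g)(X,Y) = g(\nabla_X\xi_i,Y) + g(X,\nabla_Y\xi_i)$ valid for the Levi-Civita connection. Substituting $\nabla_X\xi_i = X - \sum_k\eta^k(X)\xi_k$, each of the two summands equals $g(X,Y) - \sum_k\eta^k(X)\eta^k(Y)$, and their sum produces exactly the factor $2$ in $(11)$.

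The computations are routine once $(9)$ and $(10)$ are in hand, so I expect no serious obstacle. The only points requiring care are the bookkeeping when passing from Lie derivatives to $\nabla$---where torsion-freeness and metric compatibility must be invoked correctly---and the repeated use of $\varphi^3 = -\varphi$, which is precisely what makes the two surviving terms in $(L_{\xi_i}\varphi)X$ cancel.
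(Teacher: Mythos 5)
Your proof is correct; all three parts follow exactly as you describe from $(9)$, $(10)$, and the algebraic identities $\varphi\xi_j=0$, $\varphi^3=-\varphi$, and $-\varphi^2X = X-\sum_k\eta^k(X)\xi_k$, and I verified each computation (the cancellation in $(L_{\xi_i}\varphi)X$ and the factor $2$ in part $iii)$ both come out as claimed). The paper states this lemma without any proof, and your argument is precisely the standard one that was evidently intended, so there is nothing to compare against.
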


\begin{theorem}
Let $M$ be a $(2n+s)$-dimensional generalized Kenmotsu manifold with
structure $\left( \varphi ,\xi _{i},\eta ^{i},g\right) .$ Then we have%
\begin{equation}
(\nabla _{X}\eta ^{i})Y=g(X,Y)-\underset{j=1}{\overset{s}{\sum }}\eta
^{j}(X)\eta ^{j}(Y)
\end{equation}
\end{theorem}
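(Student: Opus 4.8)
The plan is to compute $(\nabla_X \eta^i)Y$ directly by converting the covariant derivative of the $1$-form $\eta^i$ into a statement about $\nabla_X \xi_i$, which is already available from Corollary (10). First I would start from the defining relation $\eta^i(Y) = g(Y,\xi_i)$ in (8) and the expansion
\[
(\nabla_X \eta^i)Y = X\big(\eta^i(Y)\big) - \eta^i(\nabla_X Y).
\]
Substituting $\eta^i(\cdot) = g(\cdot,\xi_i)$ and applying metric compatibility $\nabla g = 0$ to $X\big(g(Y,\xi_i)\big) = g(\nabla_X Y,\xi_i) + g(Y,\nabla_X \xi_i)$, the two terms carrying $\nabla_X Y$ cancel, so that
\[
(\nabla_X \eta^i)Y = g(Y,\nabla_X \xi_i).
\]

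Next I would invoke Corollary (10), namely $\nabla_X \xi_i = -\varphi^2 X$, to eliminate the connection term. This reduces the problem to the purely algebraic identity $-\varphi^2 X = X - \sum_{j=1}^s \eta^j(X)\xi_j$, which is immediate from the structure equation (6). Inserting it and using linearity of $g$ gives
\[
(\nabla_X \eta^i)Y = g(X,Y) - \sum_{j=1}^s \eta^j(X)\,g(Y,\xi_j),
\]
and a final application of $g(Y,\xi_j) = \eta^j(Y)$ from (8) produces the claimed formula (13).

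There is no genuine obstacle here: the statement is essentially a one-line consequence of Corollary (10) once the covariant derivative of the form is rewritten via metric compatibility. The only point requiring a little care is the cancellation step, where one must correctly apply $\nabla g = 0$ so that the $\eta^i(\nabla_X Y)$ term exactly annihilates the $g(\nabla_X Y,\xi_i)$ term arising from differentiating the metric; after that the computation is mechanical. It is worth noting that the full structure equation (9) for $\nabla\varphi$ is not needed directly in this argument—only its corollary (10), together with the algebraic relations (6) and (8), enters.
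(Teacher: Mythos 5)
Your argument is correct and is exactly the route the paper takes: its proof simply says "Using (8) and (10) we get the desired result," and your computation is the fully written-out version of that, passing through metric compatibility, $\nabla_X\xi_i=-\varphi^2X$, and the algebraic identity $-\varphi^2X=X-\sum_{j=1}^s\eta^j(X)\xi_j$ from (6). No difference in substance, only in the level of detail.
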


for all $X,Y\in \Gamma (TM),i\in \left\{ 1,2,...,s\right\} .$

\begin{proof}
Using $(8)$ and $(10)$ we get the desired result.
\end{proof}

We have below the corollary in case $s=1$.

\begin{corollary}
Let $\left( M^{2n+1},\varphi ,\xi ,\eta
,g\right) $ be an almost contact metric manifold. $M$ is a Kenmotsu manifold if and only if
\begin{equation*}
\left( \nabla _{X}\varphi \right) Y=g(\varphi X,Y)\xi -\eta (Y)\varphi X
\end{equation*}%
for all $X,Y\in \Gamma (TM),$ $i\in \left\{ 1,2,...,s\right\} ,$ where $%
\nabla $ is Riemannian connection on M \textit{\cite{K}}.
\end{corollary}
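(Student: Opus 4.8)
The plan is to obtain this corollary as the specialization of Theorem 2.4 to the case $s=1$, so essentially no new argument is needed beyond identifying the two frameworks. First I would observe that when $s=1$ the defining relations $(6)$--$(8)$ of an almost $s$-contact metric manifold collapse exactly to $(2)$--$(4)$: the sum $\sum_{i=1}^{s}\eta^{i}\otimes\xi_{i}$ reduces to the single term $\eta\otimes\xi$, and $\eta^{i}(\xi_{j})=\delta_{ij}$ becomes $\eta(\xi)=1$. Hence an almost $1$-contact metric manifold is precisely an almost contact metric manifold $(M^{2n+1},\varphi,\xi,\eta,g)$. Likewise the conditions of Definition 2.1 reduce to: $\eta$ closed, $d\Phi=2\,\eta\wedge\Phi$, and normality; so a generalized Kenmotsu manifold with $s=1$ is what is classically called a Kenmotsu manifold.

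With these identifications recorded, I would simply invoke Theorem 2.4. Its characterizing identity $(9)$ contains a sum $\sum_{i=1}^{s}$ which, for $s=1$, consists of a single summand, giving $(\nabla_{X}\varphi)Y=g(\varphi X,Y)\xi-\eta(Y)\varphi X$ for all $X,Y\in\Gamma(TM)$. This is exactly the asserted formula, and since Theorem 2.4 is an \emph{if and only if} statement, both implications of the corollary follow at once from it.

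The one substantive point to verify is that this condition coincides with Kenmotsu's original condition $(5)$. Using the skew-symmetry $g(X,\varphi Y)=-g(\varphi X,Y)$ from $(8)$, the first term rewrites as $g(\varphi X,Y)\xi=-g(X,\varphi Y)\xi$, so the displayed identity becomes $(\nabla_{X}\varphi)Y=-\eta(Y)\varphi X-g(X,\varphi Y)\xi$, which is precisely $(5)$. This is the only calculation involved, and it is routine; I do not expect any genuine obstacle, since the entire content of the corollary is already carried by Theorem 2.4, and the remaining work is the bookkeeping of setting $s=1$ in the $s$-contact framework and matching the two algebraic forms of the covariant-derivative condition via skew-symmetry. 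The citation \cite{K} is appropriate here precisely because the resulting identity is Kenmotsu's defining relation.
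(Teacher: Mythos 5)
Your proposal is correct and matches the paper's intent exactly: the paper gives no separate proof, presenting this corollary simply as the case $s=1$ of the characterization theorem (Theorem 2.4), which is precisely your argument. Your additional check that $g(\varphi X,Y)\xi-\eta(Y)\varphi X$ agrees with Kenmotsu's original condition $(5)$ via the skew-symmetry $g(X,\varphi Y)=-g(\varphi X,Y)$ is routine and sound.
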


\begin{theorem}
Let $F$ be \ a K\"{a}hler manifold $f(t)=ke^{\overset{s}{\underset{i=1}{\sum
}}t_{i}}$ be\ a function on $
\mathbb{R}^{s},$ and k be a non-zero constant. Then the warped product space 
$M=\mathbb{R}^{s}\times _{f}F$ have a generalized Kenmotsu manifold.
\end{theorem}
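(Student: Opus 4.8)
The plan is to put an explicit almost $s$-contact metric structure on $M=\mathbb{R}^s\times_f F$ and then verify the defining identity $(9)$, so that Theorem~2.3 delivers the conclusion at once. Write $t_1,\dots,t_s$ for the standard coordinates on $\mathbb{R}^s$ and let $(J,g_F)$ denote the K\"ahler structure of $F$. I would take $\xi_i=\partial/\partial t_i$, $\eta^i=dt_i$, equip $M$ with the warped product metric $g=\sum_{i}dt_i\otimes dt_i+f^2 g_F$, and define $\varphi$ to act as the (lifted) complex structure $J$ on vectors tangent to $F$ and by $\varphi\xi_i=0$ on the base directions. Using that $J$ is $g_F$-orthogonal and $J^2=-I$, a direct check shows $(6)$, $(7)$, $(8)$ hold, so $M$ is a metric $f$-manifold. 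Since the fundamental form restricts to $\Phi=f^2\omega_F$, with $\omega_F$ the pulled-back K\"ahler form of $F$, nondegeneracy of $\omega_F$ (together with $\eta^1\wedge\cdots\wedge\eta^s=dt_1\wedge\cdots\wedge dt_s\neq 0$) gives $\eta^1\wedge\cdots\wedge\eta^s\wedge\Phi^n\neq 0$, so $M$ is an almost $s$-contact metric manifold and Theorem~2.3 applies.

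The computational heart is the Levi--Civita connection, for which I would use the O'Neill warped product formulas. Since $\mathbb{R}^s$ is flat, $\nabla_{\xi_i}\xi_j=0$, while for $U,V$ tangent to $F$,
\begin{equation*}
\nabla_{\xi_i}U=\nabla_U\xi_i=\frac{\xi_i(f)}{f}\,U,\qquad \nabla_U V=\nabla^F_U V-\frac{g(U,V)}{f}\,\operatorname{grad}f .
\end{equation*}
The special choice $f(t)=ke^{\sum_i t_i}$ is exactly what makes the identity close up: it gives $\xi_i(f)=f$, hence $\xi_i(f)/f=1$, and $\operatorname{grad}f=f\sum_i\xi_i$. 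Thus $\nabla_{\xi_i}U=\nabla_U\xi_i=U$, the correction term in $\nabla_U V$ becomes $-g(U,V)\sum_i\xi_i$, and, crucially, $\varphi(\operatorname{grad}f)=f\sum_i\varphi\xi_i=0$.

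With these formulas I would verify $(9)$ by splitting into the three cases according to whether the arguments lie in the base or the fibre. The base--base and the two mixed cases reduce to short computations using $\varphi\xi_i=0$ and $\eta^i(U)=0$ for $U$ tangent to $F$; for instance, with $X=U$ and $Y=\xi_b$ one gets $(\nabla_U\varphi)\xi_b=-\varphi(\nabla_U\xi_b)=-\varphi U$, which matches $\sum_i\{g(\varphi U,\xi_b)\xi_i-\eta^i(\xi_b)\varphi U\}=-\varphi U$ since $\varphi U$ is orthogonal to the base. The genuinely substantive case, which I expect to be the main obstacle, is $X=U$, $Y=V$ both tangent to $F$: here $(\nabla_U\varphi)V=\nabla_U(JV)-\varphi(\nabla_U V)$, and I would expand both terms. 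The K\"ahler condition $\nabla^F J=0$ cancels the fibre-tangential pieces $J(\nabla^F_U V)$, while $\varphi(\operatorname{grad}f)=0$ kills the correction term entering through $\varphi(\nabla_U V)$; what survives is precisely $-\frac{g(U,JV)}{f}\operatorname{grad}f=g(\varphi U,V)\sum_i\xi_i$, using $g(U,JV)=-g(\varphi U,V)$ from $(8)$. As $\eta^i(V)=0$, this equals the right-hand side of $(9)$. Having confirmed $(9)$ in every case, Theorem~2.3 shows $M$ is a generalized Kenmotsu manifold, which completes the proof.
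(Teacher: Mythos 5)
Your proof is correct, but it takes a genuinely different route from the paper. After setting up the same structure $(\varphi,\xi_i,\eta^i,g_f)$ on $\mathbb{R}^s\times_f F$, the paper verifies Definition~2.1 directly and purely exterior-algebraically: $\eta^i=dt^i$ is closed, $\Phi=f^2\pi^*(\Psi)$ with $\Psi$ the K\"ahler form, so $d\Psi=0$ and $d(f^2)=2f^2\sum_i dt^i$ give $d\Phi=2\sum_i\eta^i\wedge\Phi$, and normality is disposed of in one line from $N_J=0$ together with closedness of the $\eta^i$. You instead verify the tensorial characterization $(9)$ case by case via the O'Neill connection formulas and then invoke the equivalence theorem. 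The paper's route is shorter and never touches the Levi--Civita connection, though its normality step is asserted rather than computed; your route is heavier computationally but makes visible exactly where each hypothesis is consumed --- the K\"ahler condition $\nabla^F J=0$ kills the fibre-tangential terms, the specific warping function enters only through $\xi_i(f)/f=1$ and $\varphi(\operatorname{grad}f)=0$ --- and it subsumes the normality and $d\Phi$ checks in one stroke, since $(9)$ is already equivalent to the full definition. Both arguments require the same preliminary verification that the structure is almost $s$-contact metric, which you carry out adequately. The one point worth tightening is the remark that checking $(9)$ on lifts of base and fibre vector fields suffices: this is because $(\nabla_X\varphi)Y$ is function-linear in both arguments, so it is determined by its values on any local frame of such lifts.
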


\begin{proof}
\bigskip Let $(F,J,G)$ be a K\"{a}hler manifold and consider 
$M=\mathbb{R}^{s}\times _{f}F,$ with coordinates $(t_{1},...,t_{s},x_{1},...,x_{2n})$. We
define $\varphi $ tensor field, $1$-form $\eta ^{i},$ vector field $\xi _{i}$
and Riemannian metric tensor $g$ on $M$ as follows:
\begin{eqnarray*}
\varphi (\frac{\partial }{\partial t_{i}},U) &=&(0,JU), \\
\eta ^{j}(\frac{\partial }{\partial t_{i}},U) &=&\delta _{ij},
\begin{array}{cc}
&
\end{array}
\xi _{i}=(\frac{\partial }{\partial t_{i}},0) \\
g_{f} &=&\overset{s}{\underset{i=1}{\sum }}dt^{i}\otimes dt^{i}+f^{2}\pi
^{\ast }(G)
\end{eqnarray*}%
where $f(t)=ke^{\overset{s}{\underset{i=1}{\sum }}t_{i}},U\in \Gamma (F)$.

Then $(M,\varphi ,\eta ^{i},\xi _{i},g_{f})$ defines $s-$contact metric
manifold. Now let us show that this manifold is a generalized Kenmotsu
manifold.

It is clear that $\eta ^{i}$ are closed. Thus, we have%
\begin{equation*}
\Phi (X,Y)=g_{f}(X,\varphi Y)=f^{2}\pi ^{\ast }(G(X,JY))
\end{equation*}

or
\begin{equation*}
\Phi =f^{2}\pi ^{\ast }(\Psi )
\end{equation*}%
where \ is fundamental $2$-form of K$\ddot{a}$hler manifold. Hence, we get

\begin{equation*}
d\Phi =2c\overset{s}{\underset{i=1}{\sum }}e^{2\overset{s}{\underset{i=1}{%
\sum }}t_{i}}dt^{i}\wedge \pi ^{\ast }(\Psi )=2\overset{s}{\underset{i=1}{%
\sum }}dt^{i}\wedge \Phi .
\end{equation*}

Finally torsion tensor $N_{\varphi }$ of $M$ is vanish, since $\eta ^{i}$
are closed and $N_{J}=0$.

Then $(M=%
\mathbb{R}
^{s}\times _{f}F,\varphi ,\eta ^{i},\xi _{i},g_{f})$ is a generalized
Kenmotsu manifold.
\end{proof}

\begin{example}
$(\mathbb{R}^{2}\times _{f}V^{4},g_{f}=\overset{2}{\underset{i=1}{\sum }}dt^{i}\otimes
dt^{i}+f^{2}G)$ is warped product with coordinates $%
(t_{1},t_{2},x_{1},x_{2},x_{3},x_{4})$, where $f^{2}=k^{2}e^{\overset{2}{%
\underset{i=1}{\sum }}t_{i}}$. Take a orthonormal frame field $\{\overline{E}%
_{1},\overline{E}_{2},\overline{E}_{3},\overline{E}_{4}\}$ of $V^{4}$ and $\{%
\overline{e}_{5},\overline{e}_{6}\}$ of $
\mathbb{R}^{2}$ such that $\overline{E}_{2}=J\overline{E}_{1}$, $\overline{E}_{4}=J%
\overline{E}_{3}.$Then we obtain a local orthonormal field $%
\{E_{1},E_{2},E_{3},E_{4},E_{5},E_{6}\}$ of $
\mathbb{R}^{2}\times _{f}V^{4}$ by%
\begin{eqnarray*}
E_{1} &=&ke^{-\overset{2}{\underset{i=1}{\sum }}t_{i}}\overline{E}_{1},%
\begin{array}{cc}
&
\end{array}%
E_{2}=ke^{-\overset{2}{\underset{i=1}{\sum }}t_{i}}\overline{E}_{2} \\
E_{3} &=&-ke^{-\overset{2}{\underset{i=1}{\sum }}t_{i}}\overline{E}_{3},%
\begin{array}{cc}
&
\end{array}%
E_{4}=-ke^{-\overset{2}{\underset{i=1}{\sum }}t_{i}}\overline{E}_{4} \\
E_{5} &=&\xi _{1},%
\begin{array}{cc}
&
\end{array}%
E_{6}=\xi _{2}.
\end{eqnarray*}%
Then $
\mathbb{R}^{2}\times _{f}V^{4}$ is a generalized Kenmotsu manifold.
\end{example}

\begin{theorem}
Let $(M^{2n+s},\varphi ,\eta ^{i},\xi _{i},g)$ be a generalized Kenmotsu
manifold, $V$ and $L$ are K\"{a}hler and a flat manifold with locally
coordinates $(x_{1},...,x_{2n})$ and $(t_{1},...,t_{s})$\ \ respectively.
Then $M$ \ a locally warped product $L^{s}\times _{f}V^{2n}$ \ where $%
f(t)=ke^{\overset{s}{\underset{i=1}{\sum }}t_{i}}$ and $k$ a nonzero
positive constant.
\end{theorem}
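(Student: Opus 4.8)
The plan is to realize $M$ locally as a warped product by studying the two canonical orthogonal distributions and then invoking a de Rham--Hiepko type characterization of warped products. Write $\mathcal{M}=\operatorname{span}\{\xi_{1},\dots,\xi_{s}\}$ and let $\mathcal{L}=\operatorname{Im}\varphi=\bigcap_{i}\ker\eta^{i}$ be its orthogonal complement, so that $TM=\mathcal{L}\oplus\mathcal{M}$ as in the Preliminaries. The distribution $\mathcal{M}$ will play the role of the flat base $L^{s}$ and $\mathcal{L}$ the role of the K\"{a}hler fibre $V^{2n}$. The first step is to check that $\mathcal{M}$ is integrable with totally geodesic, flat leaves: by the Lemma we have $\nabla_{\xi_{j}}\xi_{i}=0$, hence $[\xi_{i},\xi_{j}]=\nabla_{\xi_{i}}\xi_{j}-\nabla_{\xi_{j}}\xi_{i}=0$, so $\mathcal{M}$ is integrable with autoparallel leaves; since the $\xi_{i}$ commute and are parallel along $\mathcal{M}$, each leaf is flat and carries coordinates $(t_{1},\dots,t_{s})$ with $\xi_{i}=\partial/\partial t_{i}$ and induced metric $\sum_{i}dt^{i}\otimes dt^{i}$, identifying the leaf with $L^{s}$.

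Next I would show that $\mathcal{L}$ is a spherical distribution. For $X,Y\in\mathcal{L}$ one has $\varphi^{2}X=-X$, so the Corollary gives $\nabla_{X}\xi_{i}=-\varphi^{2}X=X$; projecting $\nabla_{X}Y$ onto $\mathcal{M}$ yields the second fundamental form
\[
h(X,Y)=\sum_{i}g(\nabla_{X}Y,\xi_{i})\xi_{i}=-\sum_{i}g(Y,\nabla_{X}\xi_{i})\xi_{i}=-g(X,Y)\sum_{i}\xi_{i}.
\]
Thus $\mathcal{L}$ is totally umbilic with mean curvature vector $H=-\sum_{i}\xi_{i}$; the symmetry of $h$ in $X,Y$ also shows $[X,Y]\in\mathcal{L}$, so $\mathcal{L}$ is integrable. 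To see that it is \emph{spherical} it remains to verify that $H$ is parallel in the normal bundle: for $X\in\mathcal{L}$ we get $\nabla_{X}H=-\sum_{i}\nabla_{X}\xi_{i}=-sX\in\mathcal{L}$, whose $\mathcal{M}$-component vanishes, i.e. $\nabla^{\perp}_{X}H=0$.

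Having a totally geodesic factor $\mathcal{M}$ and a spherical complementary factor $\mathcal{L}$, the Hiepko decomposition theorem lets me conclude that every point of $M$ has a neighbourhood isometric to a warped product $L^{s}\times_{f}V^{2n}$, with $L^{s}$ a leaf of $\mathcal{M}$ and $V^{2n}$ a leaf of $\mathcal{L}$. The warping function is recovered from the relation $H=-\operatorname{grad}(\ln f)$ between the fibre mean curvature and $f$; since $H=-\sum_{i}\xi_{i}=-\sum_{i}\partial/\partial t_{i}$ and the base metric is $\sum_{i}dt^{i}\otimes dt^{i}$, this forces $\partial_{t_{i}}(\ln f)=1$ for each $i$, whence $\ln f=\sum_{i}t_{i}+\text{const}$ and $f(t)=ke^{\sum_{i}t_{i}}$ for a nonzero constant $k$. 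Finally I would check that the fibre is K\"{a}hler. Put $J=\varphi|_{\mathcal{L}}$, an almost complex structure with $J^{2}=-I$. For $X,Y\in\mathcal{L}$ we have $\eta^{i}(Y)=0$, so the defining identity $(9)$ reduces to $(\nabla_{X}\varphi)Y=g(\varphi X,Y)\sum_{i}\xi_{i}$, which lies entirely in $\mathcal{M}$; thus the $\mathcal{L}$-component of $(\nabla_{X}\varphi)Y$ vanishes, and together with the O'Neill formulas relating $\nabla$ to the fibre connection $\nabla^{V}$ this yields $(\nabla^{V}_{X}J)Y=0$, while normality ensures the restricted Nijenhuis torsion vanishes, so $(V^{2n},J,g_{V})$ is K\"{a}hler and $L^{s}$ is flat by the first step.

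The main obstacle will be the last two steps: invoking the Hiepko theorem in its local form and, above all, passing from the ambient identity for $(\nabla_{X}\varphi)Y$ to the intrinsic statement $(\nabla^{V}_{X}J)Y=0$ on the fibre. The latter requires the warped-product O'Neill formulas connecting $\nabla$ and $\nabla^{V}$ and careful bookkeeping with the conformal factor in $g|_{\mathcal{L}}=f^{2}g_{V}$, since a priori the covariant derivatives of $\varphi$ and of the rescaled $J$ differ by terms involving $\operatorname{grad}(\ln f)$; one must confirm that these correction terms are absorbed exactly by the $\mathcal{M}$-valued part of $(\nabla_{X}\varphi)Y$ computed above.
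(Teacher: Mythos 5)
Your proposal is correct and follows essentially the same route as the paper: the same splitting $TM=\mathcal{L}\oplus\mathcal{M}$, integrability and total geodesy of the leaves of $\mathcal{M}$ from $\nabla_{\xi_i}\xi_j=0$, total umbilicity of the leaves of $\mathcal{L}$ from $\nabla_X\xi_i=X$, and the identification of $J=\varphi|_{\mathcal{L}}$ as a K\"{a}hler structure on the fibre. The only divergences are matters of rigor rather than of route: you invoke Hiepko's decomposition theorem explicitly and verify the spherical condition $\nabla^{\perp}_X H=0$ (which the paper leaves implicit when it asserts the warped-product form), and you recover the warping function from $H=-\operatorname{grad}(\ln f)$ instead of from the Lie-derivative identity for $g_f$ combined with $(11)$ --- both yield $\xi_i(f)=f$ and hence $f=ke^{\sum_{i=1}^{s}t_i}$.
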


\begin{proof}
We know that $TM=\mathcal{L}\oplus \mathcal{M%
}$. $ \mathcal{L}\ $ is clearly integrable, since $d\eta ^{i}=0.$ Then $V$ integral manifold of $ \mathcal{L}\ $ is totally umbilical because $\nabla _{X}\xi _{i}=X.$ On the other hand $[\xi _{i},\xi _{j}]=0$ and $\nabla _{\xi _{i}}\xi _{j}=0$, $\mathcal{M}$ is integrable and $L$ integral manifold is\ totally geodesic.

We select $J=\varphi \mid _{D}$ such that $J^{2}=-I,$ $G=g\mid _{D}.$ Then $%
(V,J,G)$ is almost Hermitian manifold. Also torsion tensor $N_{J}=N_{\varphi
}=0$ and using $(9),$ we get $(\nabla _{X}J)Y=0.$ Then \ $(V,J,G)$ is K%
\"{a}hler manifold.

Then $M=L\times _{f}V$ is locally a warped product \ and metric \ is
\begin{equation*}
g_{f}=\overset{s}{\underset{i=1}{\sum }}dt^{i}\otimes dt^{i}+f^{2}G.
\end{equation*}%
Its follows that
\begin{equation*}
(L_{\xi _{i}}g_{f})(X,Y)=\frac{2\xi _{i}(f)}{f}G(X,Y)
\end{equation*}%
and using $(11)$, we get
\begin{equation*}
\xi _{i}(f)=f,%
\begin{array}{cc}
&
\end{array}%
i=1,...,s.
\end{equation*}%
Thus,we have
\begin{equation*}
\frac{\partial f(t_{1},...,t_{s})}{\partial t_{i}}=f(t_{1},...,t_{s}),%
\begin{array}{cc}
&
\end{array}%
i=1,...,s.
\end{equation*}%
Therefore, we obtained $f(t_{1},...,t_{s})=ce^{\overset{s}{\underset{i=1}{%
\sum }}t_{i}}$ where c is nonzero constant.
\end{proof}
\begin{example}
\bigskip Let's go back to the example 2.3. Let $(%
\mathbb{R}
^{7},\varphi ,\eta ^{i},\xi _{i},g)$ be a generalized Kenmotsu manifold
where $i=1,2,3$. Take a orthonormal frame field
\begin{equation*}
\left\{ \frac{\partial }{\partial z_{1}}=\xi _{1},%
\begin{array}{c}
\end{array}%
\frac{\partial }{\partial z_{2}}=\xi _{2},%
\begin{array}{c}
\end{array}%
\frac{\partial }{\partial z_{3}}=\xi _{3}\right\}
\end{equation*}%
of $%
\mathbb{R}
^{3}$ and
\begin{equation*}
\left\{ \frac{e^{2\overset{3}{\underset{i=1}{\sum }}z_{i}}}{%
c_{1}^{2}+c_{2}^{2}}(f_{1}e_{1}-f_{2}e_{2}),%
\begin{array}{c}
\end{array}%
\frac{e^{2\overset{3}{\underset{i=1}{\sum }}z_{i}}}{c_{1}^{2}+c_{2}^{2}}%
(f_{2}e_{1}+f_{1}e_{2}),%
\begin{array}{c}
\end{array}%
\frac{e^{2\overset{3}{\underset{i=1}{\sum }}z_{i}}}{c_{1}^{2}+c_{2}^{2}}%
(f_{1}e_{3}-f_{2}e_{4}),%
\begin{array}{c}
\end{array}%
\frac{e^{2\overset{3}{\underset{i=1}{\sum }}z_{i}}}{c_{1}^{2}+c_{2}^{2}}%
(f_{2}e_{3}+f_{1}e_{4})\right\}
\end{equation*}%
of $%
\mathbb{R}
^{4}.$ Then $%
\mathbb{R}
^{7}=%
\mathbb{R}
^{3}\times
\mathbb{R}
^{4}$ is product manifold, the structure by tensor $\varphi $ and metric
tensor $g.$ $%
\mathbb{R}
^{4}$ is the standard K\"{a}hler structure $(J,G)$. Here the Riemannian
metric $g$ is warped product metric
\begin{equation*}
g_{0}+cf^{2}G
\end{equation*}%
where $g_{0}$\ \ is the Euclidean metric of $%
\mathbb{R}
^{3},$ f is the function defined on $%
\mathbb{R}
^{3}$ by%
\begin{equation*}
f(z_{1},z_{2},z_{3})=e^{2\overset{3}{\underset{i=1}{\sum }}z_{i}}\text{and}%
\begin{array}{cc}
&
\end{array}%
c=\frac{1}{c_{1}^{2}+c_{2}^{2}}.
\end{equation*}
\end{example}
\section{Some Curvature Properties}
\begin{theorem}
Let $M$ be a $(2n+s)$-dimensional generalized Kenmotsu manifold with
structure $\left( \varphi ,\xi _{i},\eta ^{i},g\right) .$ Then we have%
\begin{equation}
R(X,Y)\xi _{i}=\underset{j=1}{\overset{s}{\sum }}\{\eta ^{j}(Y)\varphi
^{2}X-\eta ^{j}(X)\varphi ^{2}Y\}
\end{equation}%
for all $X,Y\in \Gamma (TM),i\in \left\{ 1,2,...,s\right\} .$
\end{theorem}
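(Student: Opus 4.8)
The plan is to compute $R(X,Y)\xi_i$ directly from its definition $R(X,Y)\xi_i = \nabla_X\nabla_Y\xi_i - \nabla_Y\nabla_X\xi_i - \nabla_{[X,Y]}\xi_i$, using the covariant-derivative formula for the structure vector fields established in Corollary~2.4, namely $\nabla_X\xi_j = -\varphi^2 X$. The entire curvature tensor applied to $\xi_i$ is thus expressed through second covariant derivatives of the map $X \mapsto -\varphi^2 X$, so the computation reduces to differentiating $\varphi^2$ and invoking the fundamental covariant-derivative identity $(\nabla_X\varphi)Y = \sum_i\{g(\varphi X,Y)\xi_i - \eta^i(Y)\varphi X\}$ from Theorem~2.1.

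First I would rewrite $\nabla_X\xi_j$ in a more convenient form. Since $\varphi^2 = -I + \sum_j \eta^j\otimes\xi_j$, we have $-\varphi^2 X = X - \sum_j \eta^j(X)\xi_j$, so $\nabla_X\xi_i = X - \sum_j\eta^j(X)\xi_i$ (summed appropriately over the structure index). Then I would compute $\nabla_Y\xi_i$ and apply $\nabla_X$ to it, which forces me to differentiate the terms $\eta^j(Y)$ and $\xi_i$. Here I would use Theorem~2.5, which gives $(\nabla_X\eta^j)Y = g(X,Y) - \sum_k\eta^k(X)\eta^k(Y)$, together with $\nabla_X\xi_j = -\varphi^2 X$ again for the derivative of the structure fields. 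The antisymmetrization in $X,Y$ inherent in the curvature tensor will cause the symmetric pieces (such as $g(X,Y)$ and the purely symmetric double-$\eta$ contributions) to cancel, which is the mechanism that collapses the long expression into the claimed two-term answer.

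The step I expect to be the \textbf{main obstacle} is bookkeeping the cancellations cleanly: after expanding $\nabla_X\nabla_Y\xi_i - \nabla_Y\nabla_X\xi_i - \nabla_{[X,Y]}\xi_i$ one obtains several families of terms — those involving $\nabla_X Y$ versus $\nabla_{[X,Y]}$, those from differentiating $\eta^j$, and those from differentiating $\xi_i$ — and I must verify that the torsion-free identity $\nabla_X Y - \nabla_Y X = [X,Y]$ is used correctly so that the connection terms acting on the first derivative recombine into $[X,Y]$ and cancel against $\nabla_{[X,Y]}\xi_i$. The surviving terms should be precisely $\sum_j\{\eta^j(Y)(-\varphi^2 X) - \eta^j(X)(-\varphi^2 Y)\}$ up to sign; I would then rewrite $-\varphi^2 X = X - \sum_k\eta^k(X)\xi_k$ to match whichever form the authors prefer, confirming it equals $\sum_j\{\eta^j(Y)\varphi^2 X - \eta^j(X)\varphi^2 Y\}$ after accounting for the sign convention in~$(13)$.

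Finally I would sanity-check the result on two special cases: setting $X=\xi_k$ should reproduce consistency with Lemma~2.3's $\nabla_{\xi_j}\xi_i = 0$, and restricting both $X,Y$ to the horizontal distribution $\mathcal{L}$ (where $\varphi^2 = -I$) should give $R(X,Y)\xi_i = 0$, reflecting the flatness of the structure-field directions transverse to $\mathcal{L}$. These checks confirm the two-term formula and its sign before committing to the full antisymmetrized computation.
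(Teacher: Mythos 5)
Your proposal is correct and follows essentially the same route as the paper: the authors likewise expand $R(X,Y)\xi_i=\nabla_X\nabla_Y\xi_i-\nabla_Y\nabla_X\xi_i-\nabla_{[X,Y]}\xi_i$ using $\nabla_X\xi_j=-\varphi^2X$ (equivalently $X-\sum_j\eta^j(X)\xi_j$), differentiate via the structure identities, and let the antisymmetrization and the torsion-free recombination into $[X,Y]$ cancel the connection and symmetric terms. Only a cosmetic slip ($\xi_i$ instead of $\xi_j$ inside your rewritten sum) needs fixing; the argument itself matches the paper's.
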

\begin{proof}
Firstly, using $(10)$ and $(6)$ we get%
\begin{equation*}
\nabla _{X}\nabla _{Y}\xi _{i}=\nabla _{X}Y+\varphi ^{2}X\underset{j=1}{%
\overset{s}{\sum }}\eta ^{j}(Y)-\underset{j=1}{\overset{s}{\sum }}\{\eta
^{j}(\nabla _{X}Y)\xi _{j}+g(Y,-\varphi ^{2}X)\xi _{j}\}
\end{equation*}%
and%
\begin{equation*}
\nabla _{\lbrack X,Y]}\xi _{i}=-\varphi ^{2}\nabla _{X}Y+\varphi ^{2}\nabla
_{Y}X.
\end{equation*}%
Then,%
\begin{eqnarray*}
R(X,Y)\xi _{i} &=&\nabla _{X}Y+\varphi ^{2}X\underset{j=1}{\overset{s}{\sum }%
}\eta ^{j}(Y)-\underset{j=1}{\overset{s}{\sum }}\{\eta ^{j}(\nabla _{X}Y)\xi
_{j}+g(Y,-\varphi ^{2}X)\xi _{j}\} \\
&&-\nabla _{Y}X-\varphi ^{2}Y\underset{j=1}{\overset{s}{\sum }}\eta ^{j}(X)+%
\underset{j=1}{\overset{s}{\sum }}\{\eta ^{j}(\nabla _{Y}X)\xi
_{j}+g(X,-\varphi ^{2}Y)\xi _{j}\} \\
&&+\varphi ^{2}\nabla _{X}Y-\varphi ^{2}\nabla _{Y}X.
\end{eqnarray*}%
From $(6)$ desired result.
\end{proof}
\begin{corollary}
Let $M$ be a $(2n+s)$-dimensional generalized Kenmotsu manifold with
structure $\left( \varphi ,\xi _{i},\eta ^{i},g\right) .$ Then we have%
\begin{equation}
R(X,\xi _{i})Y=\underset{j=1}{\overset{s}{\sum }}\{\eta ^{j}(Y)\varphi
^{2}X-g(X,\varphi ^{2}Y)\xi _{j}\}
\end{equation}%
\begin{equation}
R(X,\xi _{j})\xi _{i}=\varphi ^{2}X,%
\begin{array}{cc}
\begin{array}{c}
\end{array}
&
\end{array}%
R(\xi _{k},\xi _{j})\xi _{i}=0
\end{equation}%
for all $X,Y\in \Gamma (TM),i,j,k\in \left\{ 1,2,...,s\right\} .$
\end{corollary}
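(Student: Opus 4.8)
The plan is to deduce both displayed identities from the curvature formula $(13)$ of the preceding theorem, which pins down $R(X,Y)\xi_i$, together with the algebraic symmetries of the Riemann tensor and the structural relations $\varphi\xi_i=0$, $\eta^j(\xi_i)=\delta_{ij}$ and the $g$-symmetry of $\varphi^2$ (the latter being immediate from $(8)$, since $g(\varphi^2 X,Y)=-g(\varphi X,\varphi Y)=g(X,\varphi^2 Y)$).

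For $(14)$ the point is that $(13)$ only describes the action of $R$ with $\xi_i$ in the \emph{third} slot, whereas here $\xi_i$ sits in the \emph{second} slot. I would relocate it using the pairing symmetry $g(R(X,\xi_i)Y,W)=g(R(Y,W)X,\xi_i)$ followed by skew-symmetry in the last two arguments, giving $g(R(X,\xi_i)Y,W)=-g(R(Y,W)\xi_i,X)$. Substituting $R(Y,W)\xi_i=\sum_{j}\{\eta^j(W)\varphi^2 Y-\eta^j(Y)\varphi^2 W\}$ from $(13)$ and expanding yields $g(R(X,\xi_i)Y,W)=\sum_{j}\{-\eta^j(W)g(\varphi^2 Y,X)+\eta^j(Y)g(\varphi^2 W,X)\}$. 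I would then repackage the two terms: writing $\eta^j(W)=g(\xi_j,W)$ in the first and using $g(\varphi^2 W,X)=g(W,\varphi^2 X)$ in the second, this becomes $g\!\left(\sum_{j}\{\eta^j(Y)\varphi^2 X-g(X,\varphi^2 Y)\xi_j\},W\right)$. Since $W$ is arbitrary and $g$ is nondegenerate, $(14)$ follows.

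For the two identities in $(15)$ I would simply specialize $(14)$. Setting $Y=\xi_i$ and using $\varphi^2\xi_i=0$ (so $g(X,\varphi^2\xi_i)=0$) together with $\eta^k(\xi_i)=\delta_{ki}$ collapses the sum to the single surviving term, giving $R(X,\xi_j)\xi_i=\varphi^2 X$. Putting $X=\xi_k$ in this first identity and invoking $\varphi^2\xi_k=0$ then gives $R(\xi_k,\xi_j)\xi_i=0$.

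The argument is essentially bookkeeping, and I do not expect a genuine obstacle, since the action of the curvature on the structure fields is already determined by $(13)$. The one place requiring care is the first step: choosing the correct curvature symmetry to move $\xi_i$ from the second to the third argument, after which the relations $\varphi\xi_i=0$ and $\eta^j(\xi_i)=\delta_{ij}$ make the remaining terms cancel cleanly.
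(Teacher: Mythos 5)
Your derivation is correct: the paper states this corollary without proof, and the intended route is exactly what you do — obtain $(14)$ from $(13)$ via the pair symmetry $g(R(X,\xi_i)Y,W)=g(R(Y,W)X,\xi_i)$ and skew-symmetry in the last two slots (using that $\varphi^2$ is $g$-self-adjoint), then specialize $Y=\xi_i$ and $X=\xi_k$ with $\varphi^2\xi_i=0$ and $\eta^k(\xi_i)=\delta_{ki}$ to get $(15)$. All steps check out.
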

\begin{corollary}
\textit{\cite{K} Let }$M$ be a $(2n+1)$-dimensional Kenmotsu manifold with
structure $\left( \varphi ,\xi ,\eta ,g\right) .$ Then we have%
\begin{eqnarray*}
R(X,Y)\xi &=&\eta (Y)X-\eta (X)Y \\
R(X,\xi )Y &=&g(X,Y)\xi -\eta (Y)X,%
\begin{array}{cc}
&
\end{array}%
R(\xi ,\xi )\xi =0
\end{eqnarray*}%
for all $X,Y\in \Gamma (TM).$
\end{corollary}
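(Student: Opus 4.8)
The plan is to derive all three identities as the $s=1$ specializations of the curvature formulas already established in Theorem~3.1 and Corollary~3.2, converting every expression involving $\varphi^{2}$ into one involving the identity by means of the almost contact identity $\varphi^{2}X=-X+\eta(X)\xi$ from $(2)$. Since a Kenmotsu manifold is precisely the case $s=1$ of a generalized Kenmotsu manifold, those formulas apply verbatim with the single structure vector field $\xi$, single $1$-form $\eta$, and all summations collapsing to one term.

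First I would treat $R(X,Y)\xi$. Setting $s=1$ in the identity of Theorem~3.1 gives $R(X,Y)\xi=\eta(Y)\varphi^{2}X-\eta(X)\varphi^{2}Y$. Substituting $\varphi^{2}X=-X+\eta(X)\xi$ and $\varphi^{2}Y=-Y+\eta(Y)\xi$ from $(2)$, the two contributions of the form $\eta(X)\eta(Y)\xi$ cancel against one another, leaving only the terms linear in $X$ and $Y$, which is the asserted expression.

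Next, for $R(X,\xi)Y$ I would specialize the first identity of Corollary~3.2 to $s=1$, namely $R(X,\xi)Y=\eta(Y)\varphi^{2}X-g(X,\varphi^{2}Y)\xi$. Using $\varphi^{2}X=-X+\eta(X)\xi$ together with $g(X,\varphi^{2}Y)=-g(X,Y)+\eta(X)\eta(Y)$ (which follows from $(2)$ and from $g(X,\xi)=\eta(X)$ in $(4)$), the $\eta(X)\eta(Y)\xi$ terms again cancel and one is left with $g(X,Y)\xi-\eta(Y)X$, as required. Finally, $R(\xi,\xi)\xi=0$ is immediate: it is the case $k=j=i=1$ of the identity $R(\xi_{k},\xi_{j})\xi_{i}=0$ in Corollary~3.2, and in any event follows at once from the skew-symmetry of the curvature tensor in its first two arguments.

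I do not expect any genuine obstacle here, as the argument is a routine specialization. The only point demanding care is the sign bookkeeping when replacing $\varphi^{2}$ by $-I+\eta\otimes\xi$: the correction terms $\eta(X)\eta(Y)\xi$ must be tracked line by line so that their cancellation is verified rather than assumed, and the overall sign of $R(X,Y)\xi$ checked against the Riemann curvature convention used throughout the paper.
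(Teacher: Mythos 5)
Your overall strategy --- specializing Theorem 3.1 and Corollary 3.2 to $s=1$ and eliminating $\varphi^{2}$ via $\varphi^{2}X=-X+\eta(X)\xi$ --- is exactly what the paper intends: the corollary is stated without proof as the $s=1$ case of those results, and your treatment of the second and third identities is correct (for $R(X,\xi)Y$ the terms $\eta(X)\eta(Y)\xi$ do cancel and one gets $g(X,Y)\xi-\eta(Y)X$; $R(\xi,\xi)\xi=0$ is immediate from skew-symmetry).

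There is, however, a concrete problem with the first identity. Carrying out the substitution you describe,
\[
\eta(Y)\varphi^{2}X-\eta(X)\varphi^{2}Y=\eta(Y)\bigl(-X+\eta(X)\xi\bigr)-\eta(X)\bigl(-Y+\eta(Y)\xi\bigr)=\eta(X)Y-\eta(Y)X,
\]
which is the \emph{negative} of the displayed claim $R(X,Y)\xi=\eta(Y)X-\eta(X)Y$. So your assertion that the cancellation leaves ``the asserted expression'' is false as written: your own method, executed correctly, contradicts the formula you set out to prove. The resolution is that the printed first identity carries a sign error. It is inconsistent with the corollary's own second identity: setting $Y=\xi$ in $R(X,\xi)Y=g(X,Y)\xi-\eta(Y)X$ gives $R(X,\xi)\xi=\eta(X)\xi-X=\varphi^{2}X$, in agreement with Corollary 3.2, whereas the printed first line would give $R(X,\xi)\xi=X-\eta(X)\xi=-\varphi^{2}X$; moreover Kenmotsu's original formula is $R(X,Y)\xi=\eta(X)Y-\eta(Y)X$, consistent with the sectional curvature $-1$ of planes containing $\xi$. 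You correctly identified the sign bookkeeping as the one delicate point, but then reported the wrong outcome; you need either to correct the target formula to $R(X,Y)\xi=\eta(X)Y-\eta(Y)X$ or to display the computation and state the discrepancy explicitly rather than asserting agreement.
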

\begin{theorem}
Let $M$ be a $(2n+s)$-dimensional generalized Kenmotsu manifold with
structure $\left( \varphi ,\xi _{i},\eta ^{i},g\right) .$ Then we have%
\begin{eqnarray*}
(\nabla _{Z}R)(X,Y,\xi _{i}) &=&sg(Z,X)Y-sg(Z,Y)X-R(X,Y)Z \\
&&+s\underset{h=1}{\overset{s}{\sum }}\eta ^{h}(Z)\{\eta ^{h}(Y)X-\eta
^{h}(X)Y\}+\underset{l=1}{\overset{s}{\sum }}\eta ^{l}(Z)R(X,Y)\xi _{l}
\end{eqnarray*}%
for all $X,Y\in \Gamma (TM),i\in \left\{ 1,2,...,s\right\} .$
\end{theorem}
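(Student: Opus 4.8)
The plan is to expand $(\nabla_Z R)(X,Y,\xi_i)$ straight from the definition of the covariant derivative of the $(1,3)$-curvature tensor,
\[
(\nabla_Z R)(X,Y,\xi_i) = \nabla_Z\bigl(R(X,Y)\xi_i\bigr) - R(\nabla_Z X,Y)\xi_i - R(X,\nabla_Z Y)\xi_i - R(X,Y)\nabla_Z\xi_i,
\]
and then to substitute the three ingredients already at hand: the explicit curvature identity $(14)$ for every factor $R(\cdot,\cdot)\xi_i$, the formula $\nabla_Z\xi_i=-\varphi^2 Z$ from $(10)$, and the covariant derivative of the structure $1$-forms $(13)$.

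First I would dispose of the last summand. By $(10)$ together with $(6)$, $\nabla_Z\xi_i=-\varphi^2 Z = Z-\sum_{h=1}^s\eta^h(Z)\xi_h$, so $R(X,Y)\nabla_Z\xi_i = R(X,Y)Z - \sum_{h=1}^s\eta^h(Z)R(X,Y)\xi_h$. Because the right-hand side of $(14)$ carries no free index, $R(X,Y)\xi_h$ is the same for every $h$, and this single line already yields the two summands $-R(X,Y)Z$ and $\sum_{l=1}^s\eta^l(Z)R(X,Y)\xi_l$ of the assertion.

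Next I would differentiate $(14)$ by the Leibniz rule. Writing $Z\bigl(\eta^j(Y)\bigr)=(\nabla_Z\eta^j)(Y)+\eta^j(\nabla_Z Y)$ and $\nabla_Z(\varphi^2 X)=(\nabla_Z\varphi^2)(X)+\varphi^2(\nabla_Z X)$, the pieces carrying $\eta^j(\nabla_Z X)$, $\eta^j(\nabla_Z Y)$, $\varphi^2(\nabla_Z X)$ and $\varphi^2(\nabla_Z Y)$ are precisely those that cancel against the two correction terms $R(\nabla_Z X,Y)\xi_i$ and $R(X,\nabla_Z Y)\xi_i$ once $(14)$ is inserted there as well. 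What then survives is built only from the derivatives of the structure tensors: the factors $(\nabla_Z\eta^j)$, evaluated by $(13)$ as $g(Z,\cdot)-\sum_k\eta^k(Z)\eta^k(\cdot)$ (independent of $j$, so the sum over $j$ contributes the factor $s$), and the factor $(\nabla_Z\varphi^2)$, which I would compute from $\varphi^2=-I+\sum_i\eta^i\otimes\xi_i$ as $(\nabla_Z\varphi^2)(W)=\sum_{h=1}^s(\nabla_Z\eta^h)(W)\xi_h+\sum_{h=1}^s\eta^h(W)\nabla_Z\xi_h$, again using $(10)$ and $(13)$.

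The crux is the final collection. Replacing each $\varphi^2$ by $-I+\sum_i\eta^i\otimes\xi_i$ via $(6)$ and inserting $(13)$, the $-I$ contributions assemble cleanly into $sg(Z,X)Y-sg(Z,Y)X+s\sum_{h=1}^s\eta^h(Z)\{\eta^h(Y)X-\eta^h(X)Y\}$, matching the first and fourth summands. The delicate part, where I expect the real work and the greatest risk of a sign or index slip, is that every remaining contribution lies along the structure distribution $\mathcal{M}$ (it is a combination of the $\xi_h$), and one must verify that these $\mathcal{M}$-valued terms cancel; this is exactly the place where the normalization $(6)$ and the relations $(8)$, in particular $\varphi\xi_i=0$ and $\eta^i\circ\varphi=0$, must be used carefully. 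Once that cancellation is secured, the surviving groups combine into the stated identity.
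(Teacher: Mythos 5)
Your strategy is the same as the paper's (Leibniz expansion of $(\nabla _{Z}R)(X,Y,\xi _{i})$, then substitution of the identity for $R(\cdot ,\cdot )\xi _{i}$, of $\nabla _{Z}\xi _{i}=-\varphi ^{2}Z$ and of $\nabla _{Z}\eta ^{j}$ --- note your equation references are shifted by one: in the paper these are $(13)$, $(10)$ and $(12)$), and your treatment of the term $-R(X,Y)\nabla _{Z}\xi _{i}$ is correct. The genuine gap sits exactly at the step you flag as delicate: the $\mathcal{M}$-valued leftovers do \emph{not} cancel when $s\geq 2$. Writing $A(Z,W)=g(\varphi Z,\varphi W)=(\nabla _{Z}\eta ^{j})(W)$, $\bar{\eta}=\sum_{h}\eta ^{h}$ and $\bar{\xi}=\sum_{h}\xi _{h}$, one gets $(\nabla _{Z}\varphi ^{2})W=A(Z,W)\,\bar{\xi}-\bar{\eta}(W)\varphi ^{2}Z$, and carrying out your plan leaves, besides the four groups of the assertion, the residue
\begin{equation*}
L=s\Bigl[ A(Z,Y)\sum_{k=1}^{s}\eta ^{k}(X)\xi _{k}-A(Z,X)\sum_{k=1}^{s}\eta
^{k}(Y)\xi _{k}\Bigr] +\bigl[ \bar{\eta}(Y)A(Z,X)-\bar{\eta}(X)A(Z,Y)\bigr] \bar{\xi},
\end{equation*}
which vanishes identically for $s=1$ and whenever $X,Y\in \mathcal{L}$, but not in general.

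That no rearrangement can make $L$ disappear is seen from the paper's own formulas: take $s=2$, $X=\xi _{1}$ and $Y=Z$ a unit vector field in $\mathcal{L}$. Decomposing $(\nabla _{Z}R)(\xi _{1},Z,\xi _{i})=(\nabla _{Z}T)(\xi _{1},Z)+R(\xi _{1},Z)\varphi ^{2}Z$ with $T(X,Y)=R(X,Y)\xi _{i}$, formula $(14)$ gives $R(\xi _{1},Z)\varphi ^{2}Z=R(Z,\xi _{1})Z=\xi _{1}+\xi _{2}$, while the derivative term works out to $-(\xi _{1}+\xi _{2})$, so the left-hand side is $0$; the stated right-hand side, however, is $-2\xi _{1}-R(\xi _{1},Z)Z=-2\xi _{1}+\xi _{1}+\xi _{2}=\xi _{2}-\xi _{1}\neq 0$. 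So the identity as printed is false for $s\geq 2$, and your proposal cannot be completed because the cancellation it defers to the end does not hold. (The paper's own proof commits the corresponding error: it silently replaces $\sum_{j}\{\eta ^{j}(Y)\varphi ^{2}X-\eta ^{j}(X)\varphi ^{2}Y\}$ by $\sum_{j}\{\eta ^{j}(X)Y-\eta ^{j}(Y)X\}$, which is legitimate only for $s=1$.) The statement can be repaired either by adding $L$ to the right-hand side or by restricting to $X,Y\in \mathcal{L}$, which is all that the subsequent corollaries actually use.
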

\begin{proof}
Using $(10)$ and $(13)$, we have%
\begin{eqnarray*}
(\nabla _{Z}R)(X,Y,\xi _{i}) &=&\nabla _{Z}\{\underset{j=1}{\overset{s}{\sum
}}\{\eta ^{j}(X)Y-\eta ^{j}(Y)X\}\}-\underset{j=1}{\overset{s}{\sum }}\{\eta
^{j}(\nabla _{Z}X)Y-\eta ^{j}(Y)\nabla _{Z}X\} \\
&&\text{ \ \ }-\underset{j=1}{\overset{s}{\sum }}\{\eta ^{j}(X)\nabla
_{Z}Y-\eta ^{j}(\nabla _{Z}Y)X\}-R(X,Y)\varphi ^{2}Z.
\end{eqnarray*}%
From $(6)$, we get%
\begin{equation*}
(\nabla _{Z}R)(X,Y,\xi _{i})=\underset{j=1}{\overset{s}{\sum }}\{g(X,\nabla
_{Z}\xi _{j})Y-g(Y,\nabla _{Z}\xi _{j})X\}-R(X,Y)Z+\underset{k=1}{\overset{s}%
{\sum }}\eta ^{k}(Y)R(X,Y)\xi _{k}.
\end{equation*}%
The proof competes from $(6)$ and $(10)$.
\end{proof}
\begin{corollary}
Let $M$ be a $(2n+s)$-dimensional generalized Kenmotsu manifold Then we have%
\begin{eqnarray*}
(\nabla _{Z}R)(X,Y,\xi _{i}) &=&sg(Z,X)Y-sg(Z,Y)X-R(X,Y)Z,%
\begin{array}{cc}
&
\end{array}%
Z\in \mathcal{L} \\
(\nabla _{\xi _{j}}R)(X,Y,\xi _{i}) &=&0
\end{eqnarray*}%
for all $X,Y\in \Gamma (TM),i\in \left\{ 1,2,...,s\right\} .$
\end{corollary}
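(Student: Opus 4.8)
The plan is to deduce both identities directly from the general expression for $(\nabla_{Z}R)(X,Y,\xi_{i})$ established in the preceding theorem, by specializing $Z$ to the two cases $Z\in\mathcal{L}$ and $Z=\xi_{j}$. The whole argument is a substitution: the first three terms $sg(Z,X)Y-sg(Z,Y)X-R(X,Y)Z$ are carried over unchanged, while the two correction sums $s\sum_{h=1}^{s}\eta^{h}(Z)\{\eta^{h}(Y)X-\eta^{h}(X)Y\}$ and $\sum_{l=1}^{s}\eta^{l}(Z)R(X,Y)\xi_{l}$ are evaluated case by case.

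For the first assertion I would take $Z\in\mathcal{L}$. Since $\mathcal{L}$ is by definition the orthogonal complement of the distribution $\mathcal{M}$ spanned by $\xi_{1},\dots,\xi_{s}$, one has $\eta^{h}(Z)=g(Z,\xi_{h})=0$ for every $h$. Both correction sums therefore vanish, and the formula collapses to $sg(Z,X)Y-sg(Z,Y)X-R(X,Y)Z$, which is exactly the claimed expression.

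For the second assertion I would set $Z=\xi_{j}$ and invoke $\eta^{h}(\xi_{j})=\delta_{hj}$ from $(6)$ together with $g(\xi_{j},X)=\eta^{j}(X)$ and $g(\xi_{j},Y)=\eta^{j}(Y)$ from $(8)$. Each sum then reduces to its single surviving $h=j$ (respectively $l=j$) summand: the first correction becomes $s\{\eta^{j}(Y)X-\eta^{j}(X)Y\}$ and the second becomes $R(X,Y)\xi_{j}$. Substituting these, the leading terms $s\eta^{j}(X)Y-s\eta^{j}(Y)X$ cancel against $s\{\eta^{j}(Y)X-\eta^{j}(X)Y\}$, while $-R(X,Y)Z=-R(X,Y)\xi_{j}$ cancels against the surviving $R(X,Y)\xi_{j}$, leaving $(\nabla_{\xi_{j}}R)(X,Y,\xi_{i})=0$.

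Since each identity follows by a one-line substitution and a Kronecker-delta reduction of the two sums, I do not expect any genuine obstacle. The only point demanding care is the bookkeeping in the case $Z=\xi_{j}$, where one must verify that the three pairs of terms annihilate each other exactly; keeping the signs straight in $\eta^{j}(Y)X-\eta^{j}(X)Y$ versus $\eta^{j}(X)Y-\eta^{j}(Y)X$ is essentially the whole content of the computation.
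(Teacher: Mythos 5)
Your proposal is correct and is exactly the argument the paper intends: the corollary is an immediate specialization of the preceding theorem's formula for $(\nabla_{Z}R)(X,Y,\xi_{i})$, with the correction sums vanishing when $\eta^{h}(Z)=0$ for $Z\in\mathcal{L}$ and cancelling pairwise when $Z=\xi_{j}$. The sign bookkeeping you describe checks out, so nothing further is needed.
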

\begin{corollary}
\textit{\cite{K} Let }$M$ be a $(2n+1)$-dimensional Kenmotsu manifold with
structure $\left( \varphi ,\xi ,\eta ,g\right) .$ Then we have%
\begin{equation*}
(\nabla _{Z}R)(X,Y,\xi )=g(Z,X)Y-g(Z,Y)X-R(X,Y)Z,%
\begin{array}{cc}
&
\end{array}%
Z\in \mathcal{L}\text{ and }(\nabla _{\xi }R)(X,Y,\xi )=0.
\end{equation*}
\end{corollary}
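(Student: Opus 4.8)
The plan is to read off this statement as the $s=1$ case of the preceding Corollary. A Kenmotsu manifold is exactly a generalized Kenmotsu manifold with $s=1$ (by the $s=1$ characterization Corollary, which recovers Kenmotsu's condition $(5)$), so every curvature identity already established for a $(2n+s)$-dimensional generalized Kenmotsu manifold specializes verbatim upon putting $s=1$ and writing $\xi_1=\xi$, $\eta^1=\eta$. Thus I do not expect to need any new computation, only a careful substitution.

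First I would take the first line of the preceding Corollary, $(\nabla_Z R)(X,Y,\xi_i)=sg(Z,X)Y-sg(Z,Y)X-R(X,Y)Z$ valid for $Z\in\mathcal{L}$, and set $s=1$. The scalar factor in front of the two metric terms becomes $1$, giving precisely $(\nabla_Z R)(X,Y,\xi)=g(Z,X)Y-g(Z,Y)X-R(X,Y)Z$ for $Z\in\mathcal{L}$. Setting $s=1$ in the second line yields $(\nabla_\xi R)(X,Y,\xi)=0$ at once. This already proves both assertions of the Corollary.

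It is instructive to see why the full formula of the Theorem degenerates so cleanly, and not merely its corollary. In that formula the two correction terms $s\sum_{h}\eta^h(Z)\{\eta^h(Y)X-\eta^h(X)Y\}$ and $\sum_{l}\eta^l(Z)R(X,Y)\xi_l$ each carry a factor of the form $\eta^i(Z)$; restricting to $Z\in\mathcal{L}$ forces $\eta(Z)=0$ (equivalently $\varphi^2 Z=-Z$ by $(6)$), so both terms drop out, leaving exactly the stated right-hand side once $s=1$. I anticipate no genuine obstacle: the argument is pure substitution, and the only fact needed beyond the earlier results is the elementary identity $\eta(Z)=0$ on $\mathcal{L}$, which is built into the splitting $TM=\mathcal{L}\oplus\mathcal{M}$. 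In this way the curvature identities of \cite{K} are recovered as a special case of our general result.
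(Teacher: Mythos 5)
Your proof is correct and is essentially the paper's own (implicit) argument: the corollary is obtained by setting $s=1$, $\xi_1=\xi$, $\eta^1=\eta$ in the preceding corollary for generalized Kenmotsu manifolds, the Kenmotsu case being exactly the $s=1$ specialization. Your additional observation that the correction terms in the full theorem vanish on $\mathcal{L}$ because $\eta^i(Z)=0$ there is precisely how the paper passes from its Theorem 3.3 to that corollary, so nothing is missing.
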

\begin{corollary}
Let $\left( M,\varphi ,\xi _{i},\eta ^{i},g\right) $ be a $(2n+s)$%
-dimensional locally-symmetric generalized Kenmotsu manifold. Then we have%
\begin{equation*}
R(X,Y)Z=s\{g(Z,X)Y-g(Z,Y)X\}.
\end{equation*}
\end{corollary}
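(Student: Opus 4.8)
The plan is to turn the hypothesis of local symmetry directly into the claimed curvature identity by feeding $\nabla R\equiv 0$ into the covariant-derivative formula for $(\nabla_{Z}R)(X,Y,\xi_{i})$ established above. Recall that $M$ is locally symmetric precisely when every covariant derivative of the Riemann tensor vanishes, so in particular $(\nabla_{Z}R)(X,Y,\xi_{i})=0$ for all $X,Y,Z\in\Gamma(TM)$ and every $i\in\{1,\dots,s\}$.

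First I would restrict to $Z\in\mathcal{L}$, so that $\eta^{h}(Z)=0$ for all $h$. For such $Z$ the established identity reduces to $(\nabla_{Z}R)(X,Y,\xi_{i})=sg(Z,X)Y-sg(Z,Y)X-R(X,Y)Z$. Imposing $(\nabla_{Z}R)(X,Y,\xi_{i})=0$ and solving for the single remaining unknown $R(X,Y)Z$ yields $R(X,Y)Z=s\{g(Z,X)Y-g(Z,Y)X\}$ on $\mathcal{L}$. This is the entire computational content: nothing is needed beyond substituting the local-symmetry hypothesis into a relation already derived.

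To pass to an arbitrary $Z$, I would use $TM=\mathcal{L}\oplus\mathcal{M}$ and write $Z=Z_{\mathcal{L}}+\sum_{m=1}^{s}\eta^{m}(Z)\xi_{m}$, then expand $R(X,Y)Z=R(X,Y)Z_{\mathcal{L}}+\sum_{m}\eta^{m}(Z)R(X,Y)\xi_{m}$ by linearity; the first summand is settled by the previous step. Here lies the one delicate point: by $(14)$ the term $R(X,Y)\xi_{m}=\sum_{j}\{\eta^{j}(Y)\varphi^{2}X-\eta^{j}(X)\varphi^{2}Y\}$ carries no dependence on the index $m$, whereas the target contribution $s\{\eta^{m}(X)Y-\eta^{m}(Y)X\}$ does. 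To reconcile them I would not use $(14)$ in isolation but return to the full formula for $(\nabla_{Z}R)(X,Y,\xi_{i})$ and set $\nabla R=0$ there; the extra terms $s\sum_{h}\eta^{h}(Z)\{\eta^{h}(Y)X-\eta^{h}(X)Y\}$ and $\sum_{l}\eta^{l}(Z)R(X,Y)\xi_{l}$ it produces are exactly what cancel against one another after substituting $\varphi^{2}X=-X+\sum_{k}\eta^{k}(X)\xi_{k}$. Verifying this telescoping along the structure directions is the main obstacle, and it confirms that the formula holds as stated, the clean case being $Z\in\mathcal{L}$ and the general one reducing to it.
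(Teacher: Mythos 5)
Your first step --- specializing the identity $(\nabla_{Z}R)(X,Y,\xi_{i})=sg(Z,X)Y-sg(Z,Y)X-R(X,Y)Z$ to $Z\in\mathcal{L}$ and imposing $\nabla R=0$ --- is exactly the paper's (unwritten) derivation of this corollary, and it is correct: it yields $R(X,Y)Z=s\{g(Z,X)Y-g(Z,Y)X\}$ for all $X,Y$ and all $Z\in\mathcal{L}$.

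The extension to arbitrary $Z$ is where the proposal breaks down: the cancellation you invoke does not occur. Evaluate the full formula of the preceding theorem at $Z=\xi_{m}$. Since $g(\xi_{m},X)=\eta^{m}(X)$ and $\eta^{h}(\xi_{m})=\delta_{hm}$, the terms $sg(\xi_{m},X)Y-sg(\xi_{m},Y)X$ and $s\sum_{h}\eta^{h}(\xi_{m})\{\eta^{h}(Y)X-\eta^{h}(X)Y\}$ cancel each other identically, while $\sum_{l}\eta^{l}(\xi_{m})R(X,Y)\xi_{l}=R(X,Y)\xi_{m}$ simply reproduces the unknown; so setting $(\nabla_{\xi_{m}}R)(X,Y,\xi_{i})=0$ gives the tautology $R(X,Y)\xi_{m}=R(X,Y)\xi_{m}$ and carries no information. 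The ``telescoping'' you describe trivializes the equation rather than forcing the target identity along the structure directions. What actually determines $R(X,Y)\xi_{m}$ is formula $(13)$, whose right-hand side is independent of the index $m$: for a unit $Y\in\mathcal{L}$ one gets $R(\xi_{1},Y)\xi_{2}=-\varphi^{2}Y=Y$, whereas the claimed formula gives $s\{g(\xi_{2},\xi_{1})Y-g(\xi_{2},Y)\xi_{1}\}=0$. Hence for $s\geq 2$ the identity cannot hold at $Z=\xi_{m}$, and the honest conclusion of your argument is the statement restricted to $Z\in\mathcal{L}$ (for $s=1$ the restriction is immaterial and one recovers Kenmotsu's result).
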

\begin{corollary}
\textit{\cite{K} Let }M be a $(2n+1)$-dimensional Kenmotsu manifold with
structure $\left( \varphi ,\xi ,\eta ,g\right) .$ If M is a locally
symmetric then we have%
\begin{equation*}
R(X,Y)Z=g(Z,X)Y-g(Z,Y)X.
\end{equation*}
\end{corollary}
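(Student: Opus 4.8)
The plan is to obtain this identity as the $s=1$ specialization of the preceding Corollary on locally-symmetric generalized Kenmotsu manifolds, while recording the mechanism that forces it. Local symmetry means $\nabla R=0$, so in particular $(\nabla _Z R)(X,Y,\xi )=0$ for every vector field $Z$. The nontrivial content therefore lies in rewriting $(\nabla _Z R)(X,Y,\xi )$ in terms of $R(X,Y)Z$ and the metric, which is exactly what the earlier computation of the covariant derivative of the curvature provides.

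First I would invoke the $s=1$ form of the corollary that evaluates $(\nabla _Z R)(X,Y,\xi )$ on $\mathcal{L}$: for $Z\in \mathcal{L}$ (that is $\eta (Z)=0$) one has $(\nabla _Z R)(X,Y,\xi )=g(Z,X)Y-g(Z,Y)X-R(X,Y)Z$. Setting the left-hand side to zero by local symmetry yields $R(X,Y)Z=g(Z,X)Y-g(Z,Y)X$ for all $X,Y\in \Gamma (TM)$ and all $Z\in \mathcal{L}$. This is the only place where the symmetry hypothesis is actually used.

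To upgrade to arbitrary $Z$ I would decompose $Z=(Z-\eta (Z)\xi )+\eta (Z)\xi $ and handle the $\xi $-direction separately. Here Theorem 3.3 (in the case $s=1$) gives $R(X,Y)\xi =\eta (X)Y-\eta (Y)X$, which is precisely $g(\xi ,X)Y-g(\xi ,Y)X$; thus the target identity already holds along $\mathcal{M}$ with no symmetry assumption. Since $R(X,Y)\cdot $ is $\mathbb{R}$-linear in its last argument and both $Z-\eta (Z)\xi \in \mathcal{L}$ and $\eta (Z)\xi \in \mathcal{M}$ satisfy the formula, the two pieces recombine to give $R(X,Y)Z=g(Z,X)Y-g(Z,Y)X$ for all $Z$.

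I do not expect a genuine obstacle, since the statement is a specialization of the general-$s$ result: one simply sets $s=1$. The only point demanding care is sign-convention bookkeeping, namely checking that the $\xi $-direction supplied by Theorem 3.3 matches $g(\xi ,X)Y-g(\xi ,Y)X$ rather than flipping sign, so that the $\mathcal{L}$- and $\mathcal{M}$-contributions are consistent and assemble into a single clean expression.
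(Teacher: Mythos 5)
Your argument is correct and is essentially the route the paper intends: the statement is given without proof as the $s=1$ case of the preceding corollary on locally-symmetric generalized Kenmotsu manifolds, which itself comes from setting $(\nabla _{Z}R)(X,Y,\xi )=0$ in the displayed formula for $Z\in \mathcal{L}$. Your additional step of handling the $\xi $-direction via $R(X,Y)\xi =\eta (X)Y-\eta (Y)X$ and recombining by linearity is a detail the paper glosses over, and the sign you use there is the one consistent with equation $(13)$ (the paper's own $s=1$ corollary states $R(X,Y)\xi =\eta (Y)X-\eta (X)Y$, which appears to be a sign typo).
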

\begin{corollary}
The $\varphi -$sectional curvature of any locally symmetric generalized
Kenmotsu manifold $\left( M,\varphi ,\xi _{i},\eta ^{i},g\right) $ is equal
to $-s$.
\end{corollary}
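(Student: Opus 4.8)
The plan is to reduce the statement to a one-line evaluation of the constant-curvature expression supplied by the preceding corollary (the locally-symmetric identity $R(X,Y)Z=s\{g(Z,X)Y-g(Z,Y)X\}$). Recall that the $\varphi$-sectional curvature is the sectional curvature $K(X,\varphi X)$ of the plane section spanned by a unit vector $X\in\mathcal{L}$ (so that $\eta^{i}(X)=0$ for every $i$) together with $\varphi X$; explicitly,
\[
K(X,\varphi X)=\frac{g(R(X,\varphi X)\varphi X,X)}{g(X,X)\,g(\varphi X,\varphi X)-g(X,\varphi X)^{2}}.
\]
First I would verify that $\{X,\varphi X\}$ is an orthonormal pair, so that the denominator equals $1$. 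Since $X\in\mathcal{L}$, relation $(7)$ gives $g(\varphi X,\varphi X)=g(X,X)-\sum_{i}\eta^{i}(X)^{2}=g(X,X)=1$, while the skew-symmetry $g(X,\varphi Y)=-g(\varphi X,Y)$ from $(8)$ yields $g(X,\varphi X)=-g(\varphi X,X)=-g(X,\varphi X)$, hence $g(X,\varphi X)=0$. Consequently the denominator is $1$ and $K(X,\varphi X)=g(R(X,\varphi X)\varphi X,X)$.

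Next I would substitute the locally-symmetric curvature identity, taking $Y=Z=\varphi X$. Using $g(\varphi X,X)=0$ and $g(\varphi X,\varphi X)=1$ just established, this gives
\[
R(X,\varphi X)\varphi X=s\{g(\varphi X,X)\varphi X-g(\varphi X,\varphi X)X\}=-sX.
\]
Pairing with $X$ and using $g(X,X)=1$ then produces $g(R(X,\varphi X)\varphi X,X)=-s$, which is the claimed value of the $\varphi$-sectional curvature.

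There is essentially no real obstacle here: once the constant-curvature form of $R$ is in hand from the previous corollary, the only genuine computational step is the orthonormality check of $\{X,\varphi X\}$, which rests entirely on the compatibility relations $(6)$--$(8)$. Since the final value $-s$ is manifestly independent of the particular choice of unit vector $X\in\mathcal{L}$, the $\varphi$-sectional curvature of a locally symmetric generalized Kenmotsu manifold is the constant $-s$, completing the proof.
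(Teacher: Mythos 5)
Your argument is correct and is exactly the route the paper intends: the corollary is stated without proof as an immediate consequence of the preceding identity $R(X,Y)Z=s\{g(Z,X)Y-g(Z,Y)X\}$, and your substitution $Y=Z=\varphi X$ for a unit $X\in\mathcal{L}$, together with the orthonormality check via $(7)$ and $(8)$, is the standard one-line evaluation that yields $K(X,\varphi X)=-s$. No gaps.
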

In this case $s=1$, we obtain that the $\varphi -$sectional curvature of any
locally symmetric Kenmotsu manifold $\left( M,\varphi ,\xi ,\eta ,g\right) $
is equal to $-1$ \textit{\cite{K}.}
\begin{theorem}
Let $M$ be a $(2n+s)$-dimensional generalized Kenmotsu manifold with
structure $\left( \varphi ,\xi _{i},\eta ^{i},g\right) .$ Then we have%
\begin{equation*}
R(X,Y)\varphi Z-\varphi R(X,Y)Z=g(Y,Z)\varphi X-g(X,Z)\varphi Y-g(Y,\varphi
Z)X+g(X,\varphi Z)Y
\end{equation*}%
\begin{equation*}
R(\varphi X,\varphi Y)Z=R(X,Y)Z+g(Y,Z)X-g(X,Z)Y+g(Y,\varphi Z)\varphi
X-g(X,\varphi Z)\varphi Y
\end{equation*}%
for all $X,Y\in \Gamma (TM),i\in \left\{ 1,2,...,s\right\} .$
\end{theorem}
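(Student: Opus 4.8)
The plan is to obtain the first identity from the Ricci commutation formula for the tensor field $\varphi$, and then to deduce the second identity from the first by invoking the symmetries of the curvature tensor. For the first identity I would start from
\[
R(X,Y)\varphi Z-\varphi R(X,Y)Z=(\nabla_{X}\nabla_{Y}\varphi)Z-(\nabla_{Y}\nabla_{X}\varphi)Z-(\nabla_{[X,Y]}\varphi)Z ,
\]
which holds for any $(1,1)$-tensor once the iterated covariant derivatives are read as the second covariant derivative of $\varphi$; the whole problem then reduces to evaluating $\nabla_{X}\big((\nabla_{Y}\varphi)Z\big)$ and antisymmetrizing. I would substitute the characterizing relation $(9)$ for $(\nabla_{Y}\varphi)Z$ and differentiate it by $\nabla_{X}$, treating the three ingredients separately: the structure fields are handled by $\nabla_{X}\xi_{i}=-\varphi^{2}X$ from $(10)$, the $1$-forms by $(\nabla_{X}\eta^{i})Z=g(X,Z)-\sum_{j}\eta^{j}(X)\eta^{j}(Z)$ from $(13)$, and the factor $\varphi Y$ by $\nabla_{X}(\varphi Y)=(\nabla_{X}\varphi)Y+\varphi\nabla_{X}Y$, expanding $(\nabla_{X}\varphi)Y$ once more through $(9)$.

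Once these substitutions are made, the genuinely connection-dependent pieces (those containing $\nabla_{X}Y$, $\nabla_{X}Z$, $\eta^{i}(\nabla_{X}Z)$ and the like) must cancel against the correction terms $-(\nabla_{Y}\varphi)(\nabla_{X}Z)-(\nabla_{\nabla_{X}Y}\varphi)Z$ contained in the second covariant derivative, leaving a purely tensorial expression. Antisymmetrizing in $X$ and $Y$ and repeatedly using the identity $(6)$, $\varphi^{2}=-I+\sum_{i}\eta^{i}\otimes\xi_{i}$, together with the skew-symmetry $g(\varphi A,B)=-g(A,\varphi B)$ from $(8)$, I expect the terms proportional to $\sum_{i}\xi_{i}$ to drop out and the survivors to reassemble into $g(Y,Z)\varphi X-g(X,Z)\varphi Y-g(Y,\varphi Z)X+g(X,\varphi Z)Y$.

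For the second identity I would avoid a fresh curvature computation and instead feed the first identity into the symmetries of $R$. Writing the first identity schematically as $R(X,Y)\varphi Z=\varphi R(X,Y)Z+S(X,Y,Z)$, I would pair $R(\varphi X,\varphi Y)Z$ with an arbitrary $W$, use the block symmetry $g(R(\varphi X,\varphi Y)Z,W)=g(R(Z,W)\varphi X,\varphi Y)$, apply the first identity to $R(Z,W)\varphi X$, and convert the resulting terms of the shape $g(\varphi\,\cdot\,,\varphi\,\cdot\,)$ by the compatibility relation $(7)$, $g(\varphi A,\varphi B)=g(A,B)-\sum_{i}\eta^{i}(A)\eta^{i}(B)$. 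Re-expressing the inner products through $g(R(X,Y)Z,W)$ and, where needed, the first Bianchi identity, the correction terms should collapse to $g(Y,Z)X-g(X,Z)Y+g(Y,\varphi Z)\varphi X-g(X,\varphi Z)\varphi Y$, giving the stated formula after stripping off $W$.

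The differentiation itself is routine; the step I expect to be the main obstacle is the bookkeeping of the summation index $i$. Since $(9)$, $(10)$ and $(13)$ each carry their own sum over the structure data, one must check that the multiples of $\sum_{i}\xi_{i}$ produced by differentiating the $\xi_{i}$ and those produced by expanding $\varphi^{2}$ genuinely recombine and cancel after antisymmetrization, rather than leaving stray terms depending on $s$. Accordingly I would organize the calculation by first isolating and cancelling the coefficient of $\sum_{i}\xi_{i}$, then the coefficients of $\varphi X$ and $\varphi Y$, and only at the very end expanding each $\varphi^{2}$ into its $-I$ and $\sum_{i}\eta^{i}\otimes\xi_{i}$ parts, so that the $X$- and $Y$-terms of the target appear last and can be read off cleanly.
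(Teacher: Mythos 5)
Your strategy---the Ricci commutation formula $R(X,Y)\varphi Z-\varphi R(X,Y)Z=(\nabla^{2}_{X,Y}\varphi)Z-(\nabla^{2}_{Y,X}\varphi)Z$ followed by substitution of $(9)$, $(10)$ and $(12)$---is the right, and essentially the only, way to attack this; the paper states the theorem without proof, so there is nothing to compare against. The gap is in your final step, where you assert that the surviving terms ``reassemble'' into the stated right-hand side. If you carry the differentiation through (it is cleanest with $\bar\xi=\sum_{i}\xi_{i}$ and $\bar\eta=\sum_{i}\eta^{i}$, so that $(\nabla_{Y}\varphi)Z=g(\varphi Y,Z)\bar\xi-\bar\eta(Z)\varphi Y$, $\nabla_{X}\bar\xi=-s\varphi^{2}X$ and $(\nabla_{X}\bar\eta)Z=s\,g(\varphi X,\varphi Z)$), you find
\begin{equation*}
(\nabla^{2}_{X,Y}\varphi)Z=-\bar\eta(Y)g(\varphi X,Z)\bar\xi-s\,g(\varphi Y,Z)\varphi^{2}X-s\,g(\varphi X,\varphi Z)\varphi Y+\bar\eta(Y)\bar\eta(Z)\varphi X,
\end{equation*}
and after antisymmetrizing in $X,Y$ the $\mathcal{L}$-part of the result is $s$ times the stated right-hand side, while the residual terms involving $\bar\eta$ and $\bar\xi$ do \emph{not} cancel for $s\geq 2$: for instance the coefficient of $g(\varphi X,Z)$ is $-\bar\eta(Y)\bar\xi+s\sum_{i}\eta^{i}(Y)\xi_{i}$, which for $s=2$ and $Y=\xi_{1}$ equals $\xi_{1}-\xi_{2}\neq0$. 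So the computation does not terminate where you expect it to.

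In fact the identity as printed is false for $s\geq 2$, so no proof can succeed. Taking $Y=\xi_{1}$ and $X,Z$ orthogonal to all the $\xi_{i}$, the paper's own formula $(14)$ gives $R(X,\xi_{1})\varphi Z-\varphi R(X,\xi_{1})Z=g(X,\varphi Z)\sum_{j=1}^{s}\xi_{j}$, whereas the claimed right-hand side reduces to $g(X,\varphi Z)\xi_{1}$. Equivalently, on the model space $\mathbb{R}^{s}\times_{f}\mathbb{C}^{n}$ the fibre directions have constant curvature $-s$, which forces the correction terms in both displayed identities to carry a factor $s$. Your plan works verbatim only for $s=1$, where it reproduces the classical Kenmotsu identity; for general $s$ the statement itself must first be corrected, and the same factor-of-$s$ defect propagates to the second identity, so deducing it from the first via the curvature symmetries and $(7)$, as you propose, is sound in method but would again not yield the formula as printed.
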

\section{Ricci Curvature Tensor}
\begin{theorem}
Let $M$ be a $(2n+s)$-dimensional generalized Kenmotsu manifold with
structure $\left( \varphi ,\xi _{i},\eta ^{i},g\right) .$ Then we have%
\begin{equation}
S(X,\xi _{i})=-2n\underset{j=1}{\overset{s}{\sum }}\eta ^{j}(X)
\end{equation}%
for all $X,Y\in \Gamma (TM),i\in \left\{ 1,2,...,s\right\} .$
\end{theorem}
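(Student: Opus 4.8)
The plan is to obtain $S(X,\xi_i)$ as a trace of the curvature operator and feed in the already-established identity $(13)$ for $R(\,\cdot\,,\cdot\,)\xi_i$. Fix a point and choose an $f$-basis $\{E_1,\dots,E_n,\varphi E_1,\dots,\varphi E_n,\xi_1,\dots,\xi_s\}$, relabelled $\{E_a\}_{a=1}^{2n+s}$, which is orthonormal by $(6)$--$(8)$. With the standard convention $S(X,Y)=\sum_{a=1}^{2n+s} g\big(R(E_a,X)Y,E_a\big)$ (the one matching the sign in $(13)$), I would write $S(X,\xi_i)=\sum_a g\big(R(E_a,X)\xi_i,E_a\big)$ and substitute $(13)$ with its first slot replaced by $E_a$ and its second by $X$, giving $R(E_a,X)\xi_i=\sum_{j=1}^{s}\{\eta^j(X)\varphi^2E_a-\eta^j(E_a)\varphi^2X\}$. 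Contracting against $E_a$ then splits $S(X,\xi_i)$ into a sum proportional to $\sum_j\eta^j(X)$ and a cross term.

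The key computation is the trace of $\varphi^2$. The first sum equals $\big(\sum_{j}\eta^j(X)\big)\sum_a g(\varphi^2E_a,E_a)$, so everything hinges on evaluating $\operatorname{tr}\varphi^2=\sum_a g(\varphi^2E_a,E_a)$. Using $\varphi^2=-I+\sum_{k=1}^s\eta^k\otimes\xi_k$ from $(6)$, one has $g(\varphi^2E_a,E_a)=-1+\sum_k \eta^k(E_a)^2$; summing over $a$ and using $\sum_a\eta^k(E_a)^2=\sum_a g(E_a,\xi_k)^2=g(\xi_k,\xi_k)=1$ yields $\operatorname{tr}\varphi^2=-(2n+s)+s=-2n$. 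This is the one step that needs care, since it is precisely the reduction of $\operatorname{tr}(-I)=-(2n+s)$ by the $s$ kernel directions that produces the stated coefficient $-2n$ rather than $-(2n+s)$; I expect this bookkeeping to be the only real obstacle.

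It then remains to kill the cross term $\sum_j\sum_a\eta^j(E_a)\,g(\varphi^2X,E_a)$. Expanding $\varphi^2X$ against the orthonormal basis by completeness, the inner sum over $a$ collapses to $g(\varphi^2X,\xi_j)$, and since $\varphi^2X=-X+\sum_k\eta^k(X)\xi_k$ one gets $g(\varphi^2X,\xi_j)=-\eta^j(X)+\eta^j(X)=0$. Hence the cross term drops out entirely and we are left with $S(X,\xi_i)=\big(\sum_{j}\eta^j(X)\big)(-2n)=-2n\sum_{j=1}^s\eta^j(X)$, as claimed. Apart from the trace of $\varphi^2$, every step is a routine application of the structure relations $(6)$--$(8)$, so the proof reduces to a single orthonormal-basis contraction.
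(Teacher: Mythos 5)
Your proposal is correct and follows essentially the same route as the paper: both contract the curvature identity $(13)$ against an orthonormal basis to compute $S(X,\xi_i)$ as a trace. The only cosmetic difference is that the paper splits the basis into the $2n$ vectors orthogonal to the $\xi_k$ and the $s$ structure vectors before substituting, whereas you apply $(13)$ uniformly and then evaluate $\operatorname{tr}\varphi^{2}=-2n$ and the vanishing cross term $g(\varphi^{2}X,\xi_j)=0$; both computations are valid and yield the stated formula.
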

\begin{proof}
If $\{E_{1},E_{2},...,E_{2n+s}\}$ are local orthonormal vector fields, then $%
S(X,Y)=\underset{k=1}{\overset{2n+s}{\sum }}g(R(E_{k},X)Y,E_{k})$ defines a
global tensor field $S$ of type $(0,2)$. Then, we obtain%
\begin{eqnarray*}
S(X,\xi _{i}) &=&\underset{k=1}{\overset{2n}{\sum }}g(\underset{j=1}{\overset%
{s}{\sum }}\{\eta ^{j}(X)\varphi ^{2}E_{k}-\eta ^{j}(E_{k})\varphi
^{2}X\},E_{k})+\underset{k=1}{\overset{s}{\sum }}g(-\varphi ^{2}X,\xi _{k})
\\
&=&\underset{j=1}{\overset{s}{\sum }}\eta ^{j}(X)\underset{k=1}{\overset{2n}{%
\sum }}g(\varphi ^{2}E_{k},E_{k}).
\end{eqnarray*}
\end{proof}
In this case $s=1$ we have $S(X,\xi )=-2n\eta (X)$ in \cite{K}.
\begin{corollary}
Let $M$ be a $(2n+s)$-dimensional generalized Kenmotsu manifold with
structure $\left( \varphi ,\xi _{i},\eta ^{i},g\right) .$ Then we have%
\begin{equation}
S(\xi _{k},\xi _{i})=-2n
\end{equation}%
for all $X,Y\in TM,i,k\in \left\{ 1,2,...,s\right\} .$
\end{corollary}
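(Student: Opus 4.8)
The plan is to obtain this as an immediate specialization of the preceding theorem, equation $(16)$, which already establishes that
\begin{equation*}
S(X,\xi _{i})=-2n\underset{j=1}{\overset{s}{\sum }}\eta ^{j}(X)
\end{equation*}
for every $X\in \Gamma (TM)$. Since a corollary to this identity is exactly what is wanted, the only work left is to evaluate the right-hand side when the argument $X$ is itself one of the structure vector fields.

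Concretely, I would set $X=\xi _{k}$ in the formula above, giving
\begin{equation*}
S(\xi _{k},\xi _{i})=-2n\underset{j=1}{\overset{s}{\sum }}\eta ^{j}(\xi _{k}).
\end{equation*}
Then I invoke the defining relation $\eta ^{j}(\xi _{k})=\delta _{jk}$ from $(6)$. Consequently the sum $\sum_{j=1}^{s}\eta ^{j}(\xi _{k})=\sum_{j=1}^{s}\delta _{jk}=1$, since exactly the single index $j=k$ contributes and all other terms vanish. Substituting this value yields $S(\xi _{k},\xi _{i})=-2n$, which is the claim.

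There is no genuine obstacle here; the statement is a one-line substitution once Theorem~4.1 is in hand. The only point worth remarking is that the result is independent of both indices $i$ and $k$: the dependence on $i$ never entered because $(16)$ holds for every fixed $i$, and the dependence on $k$ collapses through the Kronecker delta. Thus every ``diagonal'' and ``off-diagonal'' Ricci evaluation on the kernel distribution $\mathcal{M}$ equals the same constant $-2n$, reflecting the symmetric role played by the $s$ structure vector fields in a generalized Kenmotsu manifold.
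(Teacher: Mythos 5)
Your proposal is correct and is exactly the intended derivation: the paper states this as an immediate corollary of Theorem~4.1, i.e.\ of formula $(16)$, and gives no separate argument. Substituting $X=\xi_k$ and collapsing $\sum_{j=1}^{s}\eta^{j}(\xi_k)=\sum_{j=1}^{s}\delta_{jk}=1$ is all that is needed.
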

\begin{theorem}
Let $M$ be a $(2n+s)$ dimensional generalized Kenmotsu manifold with
structure $\left( \varphi ,\xi _{i},\eta ^{i},g\right) .$ Then we have%
\begin{equation}
S(\varphi X,\varphi Y)=S(X,Y)+2n\underset{i=1}{\overset{s}{\sum }}\eta
^{i}(X)\eta ^{i}(Y)
\end{equation}%
for all $X,Y\in \Gamma (TM),i\in \left\{ 1,2,...,s\right\} .$
\end{theorem}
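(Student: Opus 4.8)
The plan is to compute $S(\varphi X,\varphi Y)$ straight from the trace definition $S(A,B)=\sum_{k=1}^{2n+s}g(R(E_k,A)B,E_k)$ relative to a local $f$-basis $\{E_1,\dots,E_n,\varphi E_1,\dots,\varphi E_n,\xi_1,\dots,\xi_s\}$, feeding in the two curvature identities of the preceding theorem together with $(6)$ and $(14)$. First I would write $S(\varphi X,\varphi Y)=\sum_k g(R(E_k,\varphi X)\varphi Y,E_k)$ and apply the identity $R(E_k,\varphi X)\varphi Y=\varphi R(E_k,\varphi X)Y+\text{(algebraic terms)}$ obtained from $R(U,V)\varphi W-\varphi R(U,V)W=g(V,W)\varphi U-g(U,W)\varphi V-g(V,\varphi W)U+g(U,\varphi W)V$ with $U=E_k$, $V=\varphi X$, $W=Y$. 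Contracting the algebraic terms against $E_k$ and using $\sum_k g(\varphi E_k,E_k)=0$, the completeness relation $\sum_k g(V,E_k)E_k=V$, and $\sum_k g(E_k,E_k)=2n+s$, those terms collapse to a scalar multiple of $P:=g(\varphi^2 X,Y)$, leaving the curvature remainder $T:=\sum_k g(\varphi R(E_k,\varphi X)Y,E_k)$.

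The next step treats $T$. Using the skew-symmetry $g(\varphi A,B)=-g(A,\varphi B)$ and $\varphi\xi_i=0$, I would reduce $T$ to $-\sum_{a=1}^{2n}g(R(E_a,\varphi X)Y,\varphi E_a)$, a sum only over $\mathcal L$. The bilinear expression $\sum_a g(R(E_a,\varphi X)Y,\varphi E_a)$ is independent of the chosen orthonormal basis of $\mathcal L$, so I may reindex by $E_a\mapsto\varphi E_a$; since $\varphi^2E_a=-E_a$ on $\mathcal L$ this rewrites $T$ as $\sum_{a}g(R(\varphi E_a,\varphi X)Y,E_a)$. Now both curvature arguments carry a $\varphi$, so the second identity $R(\varphi U,\varphi V)W=R(U,V)W+g(V,W)U-g(U,W)V+g(V,\varphi W)\varphi U-g(U,\varphi W)\varphi V$ applies verbatim with $U=E_a$, $V=X$, $W=Y$. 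Contracting against $E_a$ produces $\sum_a g(R(E_a,X)Y,E_a)$ plus algebraic terms evaluated by the same projection formulas. The one piece of bookkeeping I must not skip is that this horizontal trace is not $S(X,Y)$: I have to restore the $s$ structure directions via $\sum_a g(R(E_a,X)Y,E_a)=S(X,Y)-\sum_{i=1}^s g(R(\xi_i,X)Y,\xi_i)$, and each correction equals $P$ because $(14)$ and $g(\varphi^2X,\xi_i)=0$ give $g(R(\xi_i,X)Y,\xi_i)=g(X,\varphi^2Y)=P$.

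Finally I would assemble the two contributions. Writing $P=-g(X,Y)+\sum_i\eta^i(X)\eta^i(Y)$ from $(6)$ and $g(\varphi X,\varphi Y)=-P$ from $(7)$, all the $P$-coefficients coming from the two contraction steps combine to $2n$, and $2n\,g(X,Y)+2n\,P=2n\sum_{i=1}^s\eta^i(X)\eta^i(Y)$, which is exactly the asserted identity. I expect the main obstacle to be precisely this middle-step bookkeeping: tracking the signs introduced by the skew-symmetry of $\varphi$ through the $E_a\mapsto\varphi E_a$ reindexing, and cleanly separating the $s$ structure-vector directions from the $2n$ horizontal ones, since the Ricci trace runs over all $2n+s$ directions whereas both curvature identities involve $\varphi$ and hence only see $\mathcal L$. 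As a partial check, restricting to $X,Y\in\mathcal L$ makes every $\eta^i$-term vanish and both contraction steps act on $\mathcal L$ alone, so the identity reduces to $S(\varphi X,\varphi Y)=S(X,Y)$, which is the cleanest case to verify first.
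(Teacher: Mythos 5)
Your proposal is correct --- I checked the bookkeeping and the coefficients do close up: the first contraction contributes $(2n+s-2)P$, the second contributes $2P$ from the algebraic terms and $-sP$ from restoring the $\xi_i$-directions, giving $2nP+2n\,g(X,Y)=2n\sum_i\eta^i(X)\eta^i(Y)$ as claimed --- but it is a genuinely different route from the paper's. The paper instead decomposes $X=X_0+\sum_i\eta^i(X)\xi_i$, $Y=Y_0+\sum_i\eta^i(Y)\xi_i$ with $X_0,Y_0\in\mathcal{L}$, expands $S$ bilinearly using $S(X_0,\xi_i)=0$ and $S(\xi_i,\xi_j)=-2n$ from $(16)$--$(17)$ to get $S(X,Y)=S(X_0,Y_0)-2n\sum_i\eta^i(X)\eta^i(Y)$, and then simply asserts $S(X_0,Y_0)=S(\varphi X,\varphi Y)$ ``since $\varphi X,\varphi Y\in\mathcal{L}$.'' That last assertion is precisely the $\varphi$-invariance of the Ricci tensor on $\mathcal{L}$ --- the case of the theorem you flag as your sanity check --- and the paper gives no argument for it; it does not follow from membership in $\mathcal{L}$ alone and really requires tracing the identity $R(\varphi X,\varphi Y)Z=R(X,Y)Z+\cdots$ of the preceding theorem, which is exactly what your second contraction step does. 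So the paper's proof is shorter but leaves its pivotal step unjustified, while yours is longer but actually supplies the missing content; the only external inputs you rely on are the two (unproved in the paper) curvature identities of Theorem 3.5 and formula $(14)$, which is a legitimate dependency.
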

\begin{proof}
We can put
\begin{equation*}
X=X_{0}+\underset{i=1}{\overset{s}{\sum }}\eta ^{i}(X)\xi _{i}\text{ and }%
Y=Y_{0}+\underset{i=1}{\overset{s}{\sum }}\eta ^{i}(Y)\xi _{i}
\end{equation*}
where $X_{0},Y_{0}\in \mathcal{L}$. Then from (16) and (17) we have,%
\begin{eqnarray*}
S(X,Y) &=&S(X_{0},Y_{0})+\underset{i=1}{\overset{s}{\sum }}\eta ^{i}(Y)\eta
^{i}(X_{0})+\underset{i=1}{\overset{s}{\sum }}\eta ^{i}(X)\eta ^{i}(Y_{0})+%
\underset{i=1}{\overset{s}{\sum }}\eta ^{i}(X)\eta ^{i}(Y)S(\xi _{i},\xi
_{i}) \\
&=&S(X_{0},Y_{0})-2n\underset{i=1}{\overset{s}{\sum }}\eta ^{i}(X)\eta
^{i}(Y).
\end{eqnarray*}
Since $\varphi X,\varphi Y\in \mathcal{L}$ we get $S(X_{0},Y_{0})=S(\varphi
X,\varphi Y)$ which implies the desired result.
\end{proof}
Considering $s=1$ in \textit{\cite{JDP},} we deduce
\begin{equation*}
S(\varphi X,\varphi Y)=S(X,Y)+2n\eta (X)\eta (Y).
\end{equation*}
\begin{theorem}
Let $M$ be a $(2n+s)$ dimensional generalized Kenmotsu manifold with
structure $\left( \varphi ,\xi _{i},\eta ^{i},g\right) .$ Then we have%
\begin{eqnarray*}
(\nabla _{\varphi X}S)(\varphi Y,\varphi Z) &=&(\nabla _{\varphi X}S)(Y,Z)-%
\underset{i=1}{\overset{s}{\sum }}\eta ^{i}(Y)\{S(X,\varphi Z)+2ng(X,\varphi
Z)\} \\
&&-\underset{i=1}{\overset{s}{\sum }}\eta ^{i}(Z)\{S(X,\varphi
Y)+2ng(X,\varphi Y)\}
\end{eqnarray*}%
for all $X,Y\in \Gamma (TM),i\in \left\{ 1,2,...,s\right\} .$
\end{theorem}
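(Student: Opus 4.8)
The plan is to differentiate the preceding $\varphi$-invariance identity (18), namely $S(\varphi Y,\varphi Z)=S(Y,Z)+2n\sum_{i}\eta^{i}(Y)\eta^{i}(Z)$, in the direction $\varphi X$. To organize the computation I would introduce the auxiliary $(0,2)$-tensor $T$ defined by $T(Y,Z)=S(\varphi Y,\varphi Z)$, so that (18) reads $T=S+2n\sum_{i}\eta^{i}\otimes\eta^{i}$ as an equality of tensors. Applying $\nabla_{\varphi X}$ to this tensor identity and using the product rule $(\nabla_{\varphi X}\eta^{i}\otimes\eta^{i})=(\nabla_{\varphi X}\eta^{i})\otimes\eta^{i}+\eta^{i}\otimes(\nabla_{\varphi X}\eta^{i})$ gives one side of the target immediately, once I evaluate $(\nabla_{\varphi X}\eta^{i})Y$ from (12): since $\eta^{j}\circ\varphi=0$ by (8), the term $\sum_{j}\eta^{j}(\varphi X)\eta^{j}(Y)$ drops out and I am left with $(\nabla_{\varphi X}\eta^{i})Y=g(\varphi X,Y)$.

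The heart of the argument is to relate $(\nabla_{\varphi X}T)(Y,Z)$ to $(\nabla_{\varphi X}S)(\varphi Y,\varphi Z)$. Expanding both by the definition of the covariant derivative of a $(0,2)$-tensor, and writing $\nabla_{\varphi X}(\varphi Y)=(\nabla_{\varphi X}\varphi)Y+\varphi\nabla_{\varphi X}Y$, I expect the two terms containing $\varphi\nabla_{\varphi X}Y$ and $\varphi\nabla_{\varphi X}Z$ to cancel against the corresponding $T(\nabla_{\varphi X}Y,Z)$ and $T(Y,\nabla_{\varphi X}Z)$ contributions. This leaves precisely the correction
\begin{equation*}
(\nabla_{\varphi X}S)(\varphi Y,\varphi Z)-(\nabla_{\varphi X}T)(Y,Z)=-S((\nabla_{\varphi X}\varphi)Y,\varphi Z)-S(\varphi Y,(\nabla_{\varphi X}\varphi)Z).
\end{equation*}

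To evaluate these two residual terms I would substitute the fundamental relation (9), which yields $(\nabla_{\varphi X}\varphi)Y=\sum_{i}\{g(\varphi^{2}X,Y)\xi_{i}-\eta^{i}(Y)\varphi^{2}X\}$. Here two simplifications enter: first, $S(\xi_{i},\varphi Z)=-2n\sum_{j}\eta^{j}(\varphi Z)=0$ by (16) together with $\eta^{j}\circ\varphi=0$, so every $\xi_{i}$-component dies; second, $\varphi^{2}X=-X+\sum_{j}\eta^{j}(X)\xi_{j}$ from (6), whence $S(\varphi^{2}X,\varphi Z)=-S(X,\varphi Z)$ after again killing the $\xi_{j}$-terms. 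Consequently $S((\nabla_{\varphi X}\varphi)Y,\varphi Z)=\sum_{i}\eta^{i}(Y)S(X,\varphi Z)$, and symmetrically for the $Z$-term.

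Assembling the pieces, and converting $g(\varphi X,Y)=-g(X,\varphi Y)$ via (8), the $g$-terms coming from the $\nabla\eta^{i}$ computation combine with the $S$-terms above to produce exactly the bracketed expressions $S(X,\varphi Z)+2ng(X,\varphi Z)$ and $S(X,\varphi Y)+2ng(X,\varphi Y)$ of the claimed formula. The step I expect to require the most care is the cancellation of the $\varphi\nabla_{\varphi X}Y$ terms and the correct bookkeeping of signs when moving $\varphi$ across $g$; the algebra is otherwise routine once the vanishing identities $S(\xi_{i},\varphi\,\cdot)=0$ and $(\nabla_{\varphi X}\eta^{i})(\cdot)=g(\varphi X,\cdot)$ are in hand.
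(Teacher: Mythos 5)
Your proposal is correct and follows essentially the same route as the paper's proof: both differentiate the $\varphi$-invariance identity $S(\varphi Y,\varphi Z)=S(Y,Z)+2n\sum_{i}\eta ^{i}(Y)\eta ^{i}(Z)$, expand $\nabla _{\varphi X}(\varphi Y)$ via $(9)$, kill the $\xi _{i}$-components with $S(\xi _{i},\varphi \,\cdot )=0$, and evaluate $(\nabla _{\varphi X}\eta ^{i})$ from $(12)$ together with $\eta ^{i}\circ \varphi =0$. Your introduction of the auxiliary tensor $T=S(\varphi \,\cdot ,\varphi \,\cdot )$ is only a bookkeeping reorganization of the same computation.
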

\begin{proof}
Using $(9)$, we get%
\begin{eqnarray*}
(\nabla _{\varphi X}S)(\varphi Y,\varphi Z) &=&\nabla _{\varphi X}S(Y,Z)+2n%
\underset{i=1}{\overset{s}{\sum }}\{\eta ^{i}(Y)\nabla _{\varphi X}\eta
^{i}(Z)+\eta ^{i}(Z)\nabla _{\varphi X}\eta ^{i}(Y)\} \\
&&+\underset{i=1}{\overset{s}{\sum }}\{-S(g(\varphi ^{2}X,Y)\xi _{i}-\eta
^{i}(Y)\varphi ^{2}X,\varphi Z)-S(\nabla _{\varphi X}Y,Z)-2n\eta ^{i}(\nabla
_{\varphi X}Y)\eta ^{i}(Z) \\
&&-S(\varphi Y,g(\varphi ^{2}X,Z)\xi _{i}-\eta ^{i}(Z)\varphi
^{2}X)-S(Y,\nabla _{\varphi X}Z)-2n\eta ^{i}(Y)\eta ^{i}(\nabla _{\varphi
X}Z)\}.
\end{eqnarray*}%
From $(6)$, $(16)$ and $(17)$ we have%
\begin{eqnarray*}
(\nabla _{\varphi X}S)(\varphi Y,\varphi Z) &=&(\nabla _{\varphi X}S)(Y,Z)+%
\underset{i=1}{\overset{s}{\sum }}\{2n\eta ^{i}(Y)(\nabla _{\varphi X}\eta
^{i})Z+2n\eta ^{i}(Z)(\nabla _{\varphi X}\eta ^{i})Y \\
&&-\eta ^{i}(Y)S(X,\varphi Z)-\eta ^{i}(Z)S(\varphi Y,X)\}.
\end{eqnarray*}
\end{proof}
\begin{corollary}
Let $M$ be a $(2n+s)$ dimensional generalized Kenmotsu manifold with
structure $\left( \varphi ,\xi _{i},\eta ^{i},g\right) .$ Then we have%
\begin{eqnarray*}
(\nabla _{X}S)(\varphi Y,\varphi Z) &=&(\nabla _{X}S)(Y,Z)+2n\underset{i=1}{%
\overset{s}{\sum }}\{g(X,Y)\eta ^{i}(Z)+g(X,Z)\eta ^{i}(Y)\} \\
&&+\underset{i=1}{\overset{s}{\sum }}\{\eta ^{i}(Y)S(X,Z)+\eta
^{i}(Z)S(X,Y)\}
\end{eqnarray*}%
for all $X,Y\in \Gamma (TM),i\in \left\{ 1,2,...,s\right\} .$
\end{corollary}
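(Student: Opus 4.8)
The plan is to obtain this corollary from the preceding Theorem by substituting $\varphi X$ for $X$. Since that identity holds for every vector field, replacing $X$ by $\varphi X$ is legitimate, and it turns the derivative direction $\varphi X$ into $\varphi(\varphi X)=\varphi^{2}X$. First I would simplify the two bracketed terms: using the compatibility relation $(7)$ together with $(17)$, the combination $S(\varphi X,\varphi Z)+2n\,g(\varphi X,\varphi Z)$ collapses to $S(X,Z)+2n\,g(X,Z)$, because the $\sum_{j}\eta^{j}(X)\eta^{j}(Z)$ supplied by $(17)$ is exactly cancelled by the one coming from $(7)$; likewise for the $Y$-slot. Thus, after the substitution, the right-hand side reads $(\nabla_{\varphi^{2}X}S)(Y,Z)-\sum_{i}\eta^{i}(Y)\{S(X,Z)+2n\,g(X,Z)\}-\sum_{i}\eta^{i}(Z)\{S(X,Y)+2n\,g(X,Y)\}$, while the left-hand side becomes $(\nabla_{\varphi^{2}X}S)(\varphi Y,\varphi Z)$.

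Next I would invoke $(6)$ in the form $\varphi^{2}X=-X+\sum_{k}\eta^{k}(X)\xi_{k}$ and the $C^{\infty}$-linearity of $W\mapsto\nabla_{W}S$ in its direction argument to split $\nabla_{\varphi^{2}X}S=-\nabla_{X}S+\sum_{k}\eta^{k}(X)\,\nabla_{\xi_{k}}S$ on both sides. Cancelling the common $\nabla_{X}$ pieces (after multiplying through by $-1$) and collecting terms, the desired formula emerges provided the residual expression $\sum_{k}\eta^{k}(X)\bigl[(\nabla_{\xi_{k}}S)(\varphi Y,\varphi Z)-(\nabla_{\xi_{k}}S)(Y,Z)\bigr]$ vanishes.

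The crux, therefore, is the auxiliary identity $(\nabla_{\xi_{k}}S)(\varphi Y,\varphi Z)=(\nabla_{\xi_{k}}S)(Y,Z)$. I would establish it by differentiating $(17)$ along $\xi_{k}$, regarded as an equality of $(0,2)$-tensors. On the left, the Lemma gives $\nabla_{\xi_{k}}\varphi=0$, so $\varphi$ passes through the derivative and $\nabla_{\xi_{k}}$ of $S(\varphi\,\cdot\,,\varphi\,\cdot\,)$ is precisely $(\nabla_{\xi_{k}}S)(\varphi\,\cdot\,,\varphi\,\cdot\,)$. On the right, differentiating the correction term $2n\sum_{i}\eta^{i}\otimes\eta^{i}$ produces factors $(\nabla_{\xi_{k}}\eta^{i})$, and evaluating $(12)$ at $X=\xi_{k}$ gives $(\nabla_{\xi_{k}}\eta^{i})Y=\eta^{k}(Y)-\eta^{k}(Y)=0$; hence that term is parallel along $\xi_{k}$ and the two Ricci derivatives coincide. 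This identity annihilates the residual sum, and rearranging yields the stated formula.

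The only delicate bookkeeping is tracking the $\xi_{k}$-contributions generated by $\varphi^{2}X$; the main obstacle is recognizing and proving the auxiliary parallelism $(\nabla_{\xi_{k}}S)(\varphi Y,\varphi Z)=(\nabla_{\xi_{k}}S)(Y,Z)$, after which everything reduces to a routine cancellation. Alternatively, one may bypass the substitution and compute $(\nabla_{X}S)(\varphi Y,\varphi Z)$ directly from $(9)$ and $(17)$, mirroring the proof of the preceding Theorem with $X$ in place of $\varphi X$; the two routes deliver the same identity.
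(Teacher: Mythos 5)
Your argument is correct, and it follows the route the paper itself clearly intends: the corollary is stated with no proof at all, immediately after the theorem, so the implicit derivation is exactly your substitution $X\mapsto\varphi X$ combined with the collapse $S(\varphi X,\varphi Z)+2n\,g(\varphi X,\varphi Z)=S(X,Z)+2n\,g(X,Z)$ (which comes from (7) together with equation (18), not (17) as you cite). You actually do more than the paper: the residual term $\sum_{k}\eta^{k}(X)\bigl[(\nabla_{\xi_{k}}S)(\varphi Y,\varphi Z)-(\nabla_{\xi_{k}}S)(Y,Z)\bigr]$ generated by $\varphi^{2}X=-X+\sum_{k}\eta^{k}(X)\xi_{k}$ is a genuine issue the paper silently ignores, and your auxiliary identity $(\nabla_{\xi_{k}}S)(\varphi Y,\varphi Z)=(\nabla_{\xi_{k}}S)(Y,Z)$, proved from $\nabla_{\xi_{k}}\varphi=0$ and $(\nabla_{\xi_{k}}\eta^{i})=0$, is precisely what is needed to close the gap.
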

\begin{definition}
The Ricci tensor $S$ of a $(2n+s)$-dimensional generalized Kenmotsu manifold
$M$ is called $\eta -parallel$, if it satisfies%
\begin{equation*}
(\nabla _{X}S)(\varphi Y,\varphi Z)=0
\end{equation*}%
for all vector fields $X,Y$ and $Z$ on $M$.
\end{definition}
\begin{theorem}
Let $\left( M,\varphi ,\xi _{i},\eta ^{i},g\right) $ a $(2n+s)$-dimensional
generalized Kenmotsu manifold. $M$ has $\eta -parallel$ if and only if%
\begin{eqnarray*}
(\nabla _{X}S)(Y,Z) &=&-2n\underset{i=1}{\overset{s}{\sum }}\{g(X,Y)\eta
^{i}(Z)+g(X,Z)\eta ^{i}(Y)\} \\
&&-\underset{i=1}{\overset{s}{\sum }}\{\eta ^{i}(Y)S(X,Z)+\eta
^{i}(Z)S(X,Y)\}
\end{eqnarray*}%
for all $X,Y,Z\in \Gamma (TM),i\in \left\{ 1,2,...,s\right\} .$
\end{theorem}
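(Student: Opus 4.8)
The plan is to deduce this equivalence directly from the identity established in the corollary immediately preceding the definition of $\eta$-parallel, namely
\begin{equation*}
(\nabla _{X}S)(\varphi Y,\varphi Z)=(\nabla _{X}S)(Y,Z)+2n\underset{i=1}{\overset{s}{\sum }}\{g(X,Y)\eta ^{i}(Z)+g(X,Z)\eta ^{i}(Y)\}+\underset{i=1}{\overset{s}{\sum }}\{\eta ^{i}(Y)S(X,Z)+\eta ^{i}(Z)S(X,Y)\}.
\end{equation*}
This identity holds for \emph{all} $X,Y,Z\in \Gamma (TM)$, so the entire statement is really an algebraic rearrangement rather than a fresh computation. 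First I would recall that, by definition, $M$ has $\eta$-parallel Ricci tensor precisely when $(\nabla _{X}S)(\varphi Y,\varphi Z)=0$ for all vector fields $X,Y,Z$.

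For the forward direction, assuming $\eta$-parallelism, I substitute the left-hand side of the corollary's identity by $0$ and transpose the two correction sums to the other side, which yields exactly
\begin{eqnarray*}
(\nabla _{X}S)(Y,Z) &=&-2n\underset{i=1}{\overset{s}{\sum }}\{g(X,Y)\eta ^{i}(Z)+g(X,Z)\eta ^{i}(Y)\} \\
&&-\underset{i=1}{\overset{s}{\sum }}\{\eta ^{i}(Y)S(X,Z)+\eta ^{i}(Z)S(X,Y)\}.
\end{eqnarray*}
For the converse, I substitute this displayed expression for $(\nabla _{X}S)(Y,Z)$ back into the corollary's identity; the two sums cancel against the correction terms and leave $(\nabla _{X}S)(\varphi Y,\varphi Z)=0$, which is the $\eta$-parallel condition. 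Since both implications are immediate consequences of a single identity valid for arbitrary arguments, no case analysis or choice of frame is required.

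I do not anticipate a genuine obstacle here: the only care needed is bookkeeping of signs when moving the $2n\sum_i$ term and the $\sum_i$ Ricci term across the equality, and making sure the equivalence is read as holding for all $X,Y,Z$ simultaneously (so that the ``if and only if'' is between two universally quantified statements). In short, the theorem is a clean reformulation of the preceding corollary, obtained by setting $(\nabla _{X}S)(\varphi Y,\varphi Z)=0$ and solving for $(\nabla _{X}S)(Y,Z)$.
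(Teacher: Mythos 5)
Your proposal is correct and is exactly the argument the paper intends: the theorem is an immediate algebraic rearrangement of the corollary stating $(\nabla _{X}S)(\varphi Y,\varphi Z)=(\nabla _{X}S)(Y,Z)+2n\sum_{i}\{g(X,Y)\eta ^{i}(Z)+g(X,Z)\eta ^{i}(Y)\}+\sum_{i}\{\eta ^{i}(Y)S(X,Z)+\eta ^{i}(Z)S(X,Y)\}$, combined with the definition of $\eta$-parallelism (the paper itself omits the proof as immediate). Both directions follow by setting the left-hand side to zero and transposing terms, just as you describe.
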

\begin{corollary}
\cite{K} Let $\left( M,\varphi ,\xi ,\eta ,g\right) $ a $(2n+1)$-dimensional
Kenmotsu manifold. $M$ has $\eta -parallel$ if and only if%
\begin{eqnarray*}
(\nabla _{X}S)(Y,Z) &=&-2n\{g(X,Y)\eta (Z)+g(X,Z)\eta (Y)\} \\
&&\text{ \ \ \ }-\eta (Y)S(X,Z)-\eta (Z)S(X,Y)
\end{eqnarray*}%
for all $X,Y,Z\in \Gamma (TM).$
\end{corollary}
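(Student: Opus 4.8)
The plan is to prove the $\eta$-parallel characterization as a direct consequence of Corollary 4.5, which states
\begin{equation*}
(\nabla _{X}S)(\varphi Y,\varphi Z)=(\nabla _{X}S)(Y,Z)+2n\underset{i=1}{\overset{s}{\sum }}\{g(X,Y)\eta ^{i}(Z)+g(X,Z)\eta ^{i}(Y)\}+\underset{i=1}{\overset{s}{\sum }}\{\eta ^{i}(Y)S(X,Z)+\eta ^{i}(Z)S(X,Y)\}.
\end{equation*}
The definition of $\eta$-parallelism is precisely the vanishing of the left-hand side, so the biconditional will follow by simply isolating $(\nabla_X S)(Y,Z)$. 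First I would invoke Corollary 4.5 and substitute $(\nabla_X S)(\varphi Y,\varphi Z)=0$ into its right-hand side.

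For the forward direction, assuming $M$ is $\eta$-parallel, setting the left side of the Corollary 4.5 identity to zero and transposing the two summation terms to the other side yields
\begin{equation*}
(\nabla _{X}S)(Y,Z)=-2n\underset{i=1}{\overset{s}{\sum }}\{g(X,Y)\eta ^{i}(Z)+g(X,Z)\eta ^{i}(Y)\}-\underset{i=1}{\overset{s}{\sum }}\{\eta ^{i}(Y)S(X,Z)+\eta ^{i}(Z)S(X,Y)\},
\end{equation*}
which is exactly the claimed formula. For the converse, I would substitute this stated expression for $(\nabla_X S)(Y,Z)$ back into the right-hand side of Corollary 4.5; the two bracketed sums cancel against the correction terms, forcing $(\nabla_X S)(\varphi Y,\varphi Z)=0$, hence $\eta$-parallelism by definition.

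Since both directions amount to algebraic rearrangement of a single previously established identity, there is no genuine analytic obstacle here. The only point requiring mild care is the sign-tracking and matching the index-$i$ summations term by term when moving expressions across the equality; one must verify that the $2n\,g(X,Y)\eta^i(Z)$ and $\eta^i(Y)S(X,Z)$ terms appear with the correct signs so that the cancellation in the converse is exact. I expect this bookkeeping to be the most error-prone—though still routine—step, and I would present it as a one-line consequence of Corollary 4.5 rather than re-deriving anything from the curvature relations.
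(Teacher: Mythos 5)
Your proposal is correct and matches the route the paper intends: the corollary is just the $s=1$ specialization of the preceding theorem, which itself is an immediate algebraic rearrangement of the identity for $(\nabla_X S)(\varphi Y,\varphi Z)$ in Corollary 4.5 combined with the definition of $\eta$-parallelism. The paper gives no separate proof for this corollary, and your sign-tracking and both directions of the equivalence check out.
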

\section{Semi-Symmetric Properties of Generalized Kenmotsu Manifolds}
With respect to the Riemannian connection $\nabla $ of a generalized
Kenmotsu manifold $\left( M,\varphi ,\xi _{i},\eta ^{i},g\right) $, we can
prove:
\begin{theorem}
The $\varphi $- sectional curvature of any semi-symmetric $(2n+s)$%
-dimensional generalized Kenmotsu manifold $\left( M,\varphi ,\xi _{i},\eta
^{i},g\right) $ is equal to $-s$.
\end{theorem}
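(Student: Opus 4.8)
The plan is to exploit the semi-symmetry hypothesis, namely $(R(U,V)\cdot R)=0$ for all vector fields $U,V$, where $R(U,V)$ acts as a derivation on the curvature tensor:
\begin{equation*}
\begin{split}
(R(U,V)\cdot R)(P,Q)W={}&R(U,V)R(P,Q)W-R(R(U,V)P,Q)W\\
&{}-R(P,R(U,V)Q)W-R(P,Q)R(U,V)W.
\end{split}
\end{equation*}
For a unit vector $X\in\mathcal{L}$ the vector $\varphi X$ is again a unit vector in $\mathcal{L}$ with $g(X,\varphi X)=0$, so the $\varphi$-sectional curvature is $H(X)=g(R(X,\varphi X)\varphi X,X)$. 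The idea is to feed the semi-symmetry identity a quadruple of vectors chosen so that, after inserting the explicit curvature formulas of Section 3, all but one of the four terms collapse and the survivor is precisely $R(X,\varphi X)\varphi X$.

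First I would record the curvature evaluations that the operator $R(\xi_i,X)$ requires. From $(15)$, for $A,B\in\mathcal{L}$ one has $\varphi^2 A=-A$ and $\eta^j(B)=0$, whence
\begin{equation*}
R(A,\xi_i)B=g(A,B)\sum_{l=1}^{s}\xi_l,\qquad R(\xi_i,A)B=-g(A,B)\sum_{l=1}^{s}\xi_l,
\end{equation*}
and from $(16)$, $R(\xi_i,A)\xi_l=-\varphi^2 A=A$ for every $l$. Thus $R(\xi_i,X)$ sends $\mathcal{L}$ into $\mathcal{M}$ and carries each structure vector $\xi_l$ to $X$; in particular $R(\xi_i,X)\varphi X=-g(X,\varphi X)\sum_l\xi_l=0$.

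Next I would apply semi-symmetry with derivation direction $(\xi_i,X)$ to the triple $(\varphi X,\xi_j,\varphi X)$, that is, evaluate $(R(\xi_i,X)\cdot R)(\varphi X,\xi_j)\varphi X=0$. In the first term $R(\varphi X,\xi_j)\varphi X=\sum_l\xi_l$, and applying $R(\xi_i,X)$ turns each $\xi_l$ into $X$, giving $sX$. The second term vanishes because $R(\xi_i,X)\varphi X=0$. In the third term $R(\xi_i,X)\xi_j=X$, and since $R(\varphi X,X)=-R(X,\varphi X)$ this contributes $R(X,\varphi X)\varphi X$ to the total. The fourth term again vanishes because $R(\xi_i,X)\varphi X=0$. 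Collecting the surviving pieces yields
\begin{equation*}
0=sX+R(X,\varphi X)\varphi X,
\end{equation*}
so $R(X,\varphi X)\varphi X=-sX$, and taking the inner product with the unit vector $X$ gives $H(X)=g(R(X,\varphi X)\varphi X,X)=-s$.

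The computation itself is routine once the auxiliary identities are in place; the only delicate points are the index bookkeeping in the sums $\sum_l\xi_l$ (this is exactly where the factor $s$ enters, since $R(\xi_i,X)$ maps each of the $s$ structure directions $\xi_l$ back to $X$) and the observation that the two potentially obstructing terms are killed by $g(X,\varphi X)=0$. The step I would be most careful about is the choice of the test quadruple: placing $\xi_j$ in the middle slot is precisely what lets $R(\xi_i,X)$ convert a structure direction into $X$ while leaving the genuinely geometric curvature $R(X,\varphi X)\varphi X$ intact, so that the semi-symmetry relation delivers the $\varphi$-sectional curvature directly.
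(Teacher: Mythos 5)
Your proof is correct and follows essentially the same route as the paper: both apply the semi-symmetry condition with a derivation $R(\xi_i,X)$ (resp.\ $R(X,\xi_i)$) to a curvature expression loaded with structure vectors, use the identities $R(X,\xi_i)Y$ and $R(X,\xi_j)\xi_i=\varphi^2X$ to kill two terms via $g(X,\varphi X)=0$, and are left with $sX+R(X,\varphi X)\varphi X=0$. The only blemish is a harmless off-by-one in your equation references (the identities you quote are the paper's $(14)$ and $(15)$, not $(15)$ and $(16)$).
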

\begin{proof}
Let $X$ be a unit vector field. Since $\left( M,\varphi ,\xi _{i},\eta
^{i},g\right) $ is semi-symmetric, then
\begin{equation*}
(R.R)(X,\xi _{i},X,\varphi X,\varphi X,\xi _{i})=0,
\end{equation*}%
for any $i,j\in \left\{ 1,2,...,s\right\} .$ Expanding this formula from $%
(7)$ and taking into account $(14)$, we get
\begin{equation*}
R(X,\varphi X,\varphi X,X)=-s,
\end{equation*}%
which completes the proof.
\end{proof}
Observe that, in the case $s=1$, by using the \textit{Theorem 11} we obtain
that a semi-symmetric Kenmotsu manifolds is constant curvature equal to $-1$
\cite{Bin}.
\begin{theorem}
Let $\left( M,\varphi ,\xi _{i},\eta ^{i},g\right) $ be a $(2n+s)$
dimensional Ricci semi-symmetric generalized Kenmotsu manifold. Then its
Ricci tensor field $S$ respect the Riemannian connection satisfies%
\begin{equation}
S(X,Y)=-2n\{sg(\varphi X,\varphi Y)+\underset{i,j=1}{\overset{s}{\sum }}\eta
^{i}(X)\eta ^{j}(Y)\}
\end{equation}%
for any $X,Y\in \Gamma (TM).$
\end{theorem}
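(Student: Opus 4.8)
The plan is to exploit the defining condition of Ricci semi-symmetry, $R(X,Y)\cdot S=0$ for all $X,Y$, which, viewing $R(X,Y)$ as a derivation acting on the $(0,2)$-tensor $S$, reads
\[
S(R(X,Y)U,V)+S(U,R(X,Y)V)=0
\]
for all vector fields $X,Y,U,V$. The whole difficulty is to select a specialization of the four slots which, after inserting the curvature identities already established for generalized Kenmotsu manifolds, collapses to an explicit expression for $S$.

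First I would put $(X,Y;U,V)=(\xi_i,X;\xi_i,Z)$, that is, evaluate $(R(\xi_i,X)\cdot S)(\xi_i,Z)=0$, obtaining
\[
S(R(\xi_i,X)\xi_i,Z)+S(\xi_i,R(\xi_i,X)Z)=0.
\]
By the antisymmetry of $R$ in its first two arguments and $(15)$ one has $R(\xi_i,X)\xi_i=-\varphi^{2}X$, while $(14)$ gives
\[
R(\xi_i,X)Z=-\Big(\sum_{j=1}^{s}\eta^{j}(Z)\Big)\varphi^{2}X+g(X,\varphi^{2}Z)\sum_{j=1}^{s}\xi_j.
\]

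Next I would evaluate $S$ on these terms by means of the Ricci identities $(16)$ and $(17)$. The crucial simplifications are $S(\xi_i,\varphi^{2}X)=0$, which follows from $\eta^{k}\circ\varphi=0$ in $(8)$ (equivalently $\eta^{k}\circ\varphi^{2}=0$), together with $\sum_{j=1}^{s}S(\xi_i,\xi_j)=-2ns$ from $(17)$. After these substitutions the relation reduces to the single identity
\[
S(\varphi^{2}X,Z)=-2ns\,g(X,\varphi^{2}Z).
\]

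Finally I would unfold $\varphi^{2}$ on both sides using $(6)$. On the left, $S(\varphi^{2}X,Z)=-S(X,Z)-2n\sum_{k,l=1}^{s}\eta^{k}(X)\eta^{l}(Z)$ once $(16)$ is used to replace $S(\xi_k,Z)$; on the right, $g(X,\varphi^{2}Z)=-g(X,Z)+\sum_{k=1}^{s}\eta^{k}(X)\eta^{k}(Z)$. Solving for $S(X,Z)$ and regrouping the metric and $\eta$-terms with the help of $(7)$, namely $g(\varphi X,\varphi Z)=g(X,Z)-\sum_{k=1}^{s}\eta^{k}(X)\eta^{k}(Z)$, yields the asserted formula. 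I expect the only real obstacle to be spotting the productive substitution $(\xi_i,X;\xi_i,Z)$ among the many available ones; after that the argument is pure bookkeeping of the structure-index sums, and I would verify the outcome against the known special values $(16)$ and $(17)$ as a consistency check.
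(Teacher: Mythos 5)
Your proposal is correct and follows essentially the same route as the paper: both apply the Ricci semi-symmetry condition with a structure vector field inserted in one curvature slot and one $S$-slot (the paper writes $S(R(X,\xi _{i})\xi _{j},Y)+S(\xi _{j},R(X,\xi _{i})Y)=0$, you take the diagonal case $j=i$ with the opposite sign convention), and then substitute the curvature and Ricci identities $(14)$--$(17)$ to solve for $S$. Your intermediate identity $S(\varphi ^{2}X,Z)=-2ns\,g(X,\varphi ^{2}Z)$ and the final unfolding via $(6)$ and $(7)$ check out and reproduce formula $(19)$ exactly.
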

\begin{proof}
Since $\left( M,\varphi ,\xi _{i},\eta ^{i},g\right) $ is Ricci
semi-symmetric, then
\begin{equation*}
S(R(X,\xi _{i})\xi _{j},Y)+S(\xi _{j},R(X,\xi _{i})Y)=0,
\end{equation*}%
for any $X,Y\in \Gamma (TM)$ and $i,j\in \left\{ 1,2,...,s\right\} .$ Now,
from $(14),$ $(15)$ and $(16)$ we get the desired result.
\end{proof}
In this case $s=1$ we have following the corollary.
\begin{corollary}
Any Ricci semi-symmetric $(2n+1)-$dimensional Kenmotsu manifold is an
Einstein manifold.
\end{corollary}
\begin{proof}
Considering $s=1$ in $(19)$, we deduce
\begin{equation*}
S(X,Y)=-2ng(X,Y)
\end{equation*}%
for any $X,Y\in \Gamma (TM).$
\end{proof}
For the Weyl projective curvature tensor field $P$, the weyl projective
curvature tensor $P$ of a $(2n+s)$-dimensional generalized Kenmotsu manifold
$M$ is given by%
\begin{equation*}
P(X,Y)Z=R(X,Y)Z-\frac{1}{2n+s-1}\{S(Y,Z)X-S(X,Z)Y\}
\end{equation*}%
where $R$ is curvature tensor and $S$ is the ricci curvature tensor of $M$,
we have the following theorem.
\begin{theorem}
The $\varphi $- sectional curvature of any projectively semi-symmetric
generalized Kenmotsu manifold $\left( M,\varphi ,\xi _{i},\eta ^{i},g\right)
$ is equal to $-s$.
\end{theorem}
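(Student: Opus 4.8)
The plan is to mimic the proof of the semi-symmetric case (Theorem 5.1), replacing the semi-symmetry condition $R\cdot R=0$ by the projective semi-symmetry condition $R\cdot P=0$, and to show that the extra Ricci-curvature terms carried by the projective tensor $P$ cancel when evaluated on the relevant structure vectors. Recall that
\begin{equation*}
P(X,Y)Z=R(X,Y)Z-\frac{1}{2n+s-1}\{S(Y,Z)X-S(X,Z)Y\}.
\end{equation*}
Projective semi-symmetry means $(R(U,V)\cdot P)=0$ for all $U,V$, i.e. for all vector fields the derivation $R(U,V)$ annihilates $P$, which expands into the four-term expression
\begin{equation*}
R(U,V)P(X,Y)Z-P(R(U,V)X,Y)Z-P(X,R(U,V)Y)Z-P(X,Y)R(U,V)Z=0.
\end{equation*}

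First I would select a unit vector field $X\in\mathcal{L}$ and insert the arguments $U=\xi_i$, $V=X$, applied to the slots $(X,\varphi X,\varphi X)$ exactly as in the proof of Theorem 5.1, then contract against $\xi_i$. The key inputs are the curvature identities already established: from Corollary 3.2 we have $R(X,\xi_i)Y=\sum_j\{\eta^j(Y)\varphi^2X-g(X,\varphi^2Y)\xi_j\}$ and $R(X,\xi_j)\xi_i=\varphi^2X$, and from $(16)$--$(17)$ we know $S(X,\xi_i)=-2n\sum_j\eta^j(X)$ and $S(\xi_k,\xi_i)=-2n$. I would substitute these into each of the four terms of the expanded $R\cdot P=0$ relation. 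Because $X$ and $\varphi X$ both lie in $\mathcal{L}$, the Ricci correction terms in $P$ simplify dramatically: the factor $\frac{1}{2n+s-1}$ multiplies combinations of $S$ evaluated on arguments that are either in $\mathcal{L}$ or equal to $\xi_i$, and the values $S(\cdot,\xi_i)$ and $S(\xi_i,\xi_i)$ are fully known by $(16)$ and Corollary 4.2.

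The main obstacle will be bookkeeping the Ricci-correction contributions so that they do not spoil the clean outcome obtained in the pure-curvature case. Concretely, I expect that after substitution the projective-correction pieces either vanish identically (because they produce $\varphi^2$ acting on a structure vector, or an $\eta^i$ evaluated on an element of $\mathcal{L}$) or reorganize into a multiple of the same sectional-curvature quantity, so that the equation collapses to the scalar relation
\begin{equation*}
R(X,\varphi X,\varphi X,X)=-s
\end{equation*}
for every unit $X\in\mathcal{L}$, which is precisely the assertion that the $\varphi$-sectional curvature equals $-s$. I would verify that the coefficient arithmetic involving $2n+s-1$ cancels exactly, leaving no residual dependence on $n$ or on the constant $\frac{1}{2n+s-1}$; confirming this cancellation is the delicate step. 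Once the scalar identity is reached, the proof concludes in one line, paralleling the final sentence of Theorem 5.1's proof.
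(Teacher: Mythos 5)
Your proposal is correct and follows essentially the same route as the paper: the paper likewise evaluates $(R\cdot P)$ on the arguments $(X,\xi _{i},X,\varphi X,\varphi X,\xi _{j})$, uses $(6)$, $(14)$ and $(16)$ to see that the Ricci-correction terms of $P$ cancel so that $(R\cdot P)=(R\cdot R)$ there, and then concludes via Theorem 5.1. The only difference is presentational — you propose to carry the computation through to the scalar identity $R(X,\varphi X,\varphi X,X)=-s$ directly, whereas the paper stops at the reduction to the semi-symmetric case.
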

\begin{proof}
Let $X$ be a unit vector field. Then, from $(6)$ and taking into account $%
(14)$ and $(16)$ we have
\begin{equation*}
(R.P)(X,\xi _{i},X,\varphi X,\varphi X,\xi _{j})=(R.R)(X,\xi _{i},X,\varphi
X,\varphi X,\xi _{j}).
\end{equation*}%
This completes the proof from the \textit{Theorem 5.1}.
\end{proof}
\begin{corollary}
Let $\left( M,\varphi ,\xi ,\eta ,g\right) $ a $(2n+1)$-dimensional Kenmotsu
manifold. The $\varphi $- sectional curvature of any projectively
semi-symmetric Kenmotsu manifold if and only if $M$ is an Einstein manifold.
\end{corollary}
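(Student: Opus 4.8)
The plan is to read this as the $s=1$ specialisation of the preceding theorem (projective semi-symmetry forces $\varphi$-sectional curvature $-s$) sharpened by a Ricci-tensor computation. So I would prove two things: that a projectively semi-symmetric Kenmotsu manifold has $\varphi$-sectional curvature $-1$, and that such a manifold is necessarily Einstein, with the Einstein property being exactly what makes projective semi-symmetry and ordinary semi-symmetry coincide here. First I would invoke the preceding theorem with $s=1$: it yields at once that the $\varphi$-sectional curvature equals $-s=-1$, with no further work. This disposes of the curvature value and reduces the corollary to identifying the Einstein property.

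Next I would exploit the projective tensor evaluated on a structure vector. Writing $P(X,Y)Z=R(X,Y)Z-\tfrac{1}{2n}\{S(Y,Z)X-S(X,Z)Y\}$ and substituting $S(\xi,Z)=-2n\eta(Z)$ from $(16)$ together with $R(X,\xi)Z=g(X,Z)\xi-\eta(Z)X$ (Corollary 3.2 for $s=1$), a short computation collapses the projective tensor along $\xi$ to $P(X,\xi)Z=\{g(X,Z)+\tfrac{1}{2n}S(X,Z)\}\xi$. This is the key simplification: the whole $\mathcal{L}$-part of $P(X,\xi)Z$ cancels and only a multiple of $\xi$ survives, with that scalar carrying precisely the deviation of $S$ from $-2n\,g$.

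Then I would impose the hypothesis $R\cdot P=0$ in the form $(R(\xi,U)\cdot P)(X,Y)Z=0$, expanded as $R(\xi,U)P(X,Y)Z-P(R(\xi,U)X,Y)Z-P(X,R(\xi,U)Y)Z-P(X,Y)R(\xi,U)Z=0$, and use $R(\xi,U)W=\eta(W)U-g(U,W)\xi$ from the Section 3 formulas. Feeding in the expression for $P(\,\cdot\,,\xi)\,\cdot$ just obtained and contracting over an $f$-basis, the terms involving $R\cdot R$ are controlled by the same semi-symmetric reduction used in the $\varphi$-sectional curvature argument, while the remaining terms isolate $g(X,Z)+\tfrac{1}{2n}S(X,Z)=0$, that is $S(X,Z)=-2n\,g(X,Z)$; hence $M$ is Einstein. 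For the converse I would substitute $S=-2n\,g$ back into $P$, which turns the correction term into the constant-curvature tensor $g(Y,Z)X-g(X,Z)Y$; since any skew-symmetric endomorphism preserving $g$ annihilates that tensor, every curvature-operator derivation kills it, so $R\cdot P=R\cdot R$ and the projective semi-symmetry condition reduces to the semi-symmetric one, recovering the value $-1$ consistently.

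The main obstacle I anticipate is the middle step: expanding $(R(\xi,U)\cdot P)(X,Y)Z=0$ and extracting a clean scalar identity, since the expansion produces many terms mixing $R$, $S$ and $\eta$, and one must contract carefully, keeping the Section 3 sign conventions straight, to see that everything apart from the multiple of $g(X,Z)+\tfrac{1}{2n}S(X,Z)$ either cancels or is already killed by the semi-symmetric computation. Once that cancellation is verified, the Einstein conclusion is immediate and the stated equivalence follows.
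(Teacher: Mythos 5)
The paper states this corollary with no proof at all (and with a grammatically incomplete assertion), so there is no argument of the authors' to compare yours against; what you propose is in effect the missing proof, and its core is sound. Your key computation is correct: with $s=1$, substituting $R(X,\xi )Z=g(X,Z)\xi -\eta (Z)X$ and $S(\xi ,Z)=-2n\eta (Z)$ into $P$ collapses the projective tensor to $P(X,\xi )Z=\{g(X,Z)+\tfrac{1}{2n}S(X,Z)\}\xi $. Writing $T(X,Z)=g(X,Z)+\tfrac{1}{2n}S(X,Z)$, which satisfies $T(\cdot ,\xi )=0$, the expansion of $(R(\xi ,U)\cdot P)(Y,\xi ,Z)=0$ simplifies more cleanly than you anticipate: all four terms are expressible through $T$ alone, no $R\cdot R$ terms survive, the $\eta $-terms cancel, and one is left with $P(Y,U)Z=T(Y,Z)U$ for $Y,U,Z$ orthogonal to $\xi $. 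Antisymmetry of $P$ in its first two arguments then gives $T(Y,Z)U=-T(U,Z)Y$, and choosing $Y,U$ linearly independent (possible since $\dim \mathcal{L}=2n\geq 2$) forces $T\equiv 0$ on $\mathcal{L}$, hence $S=-2ng$ and $M$ is Einstein. So the middle step you flag as the main obstacle does go through, and without any contraction over an $f$-basis.

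Two caveats. First, since the printed statement is garbled, the exact biconditional being claimed is a matter of interpretation; your reading (projective semi-symmetry yields $\varphi $-sectional curvature $-1$ together with the Einstein condition) is the most defensible one, but it is a reconstruction, not something the paper certifies. Second, your converse establishes only that for an Einstein Kenmotsu manifold the correction term in $P$ is the constant-curvature tensor, so that $R\cdot P=R\cdot R$ and projective semi-symmetry coincides with semi-symmetry; it does not show that the Einstein condition implies $R\cdot P=0$. If the intended corollary is a genuine equivalence between projective semi-symmetry and the Einstein property, that direction is not proved by your argument and would require an additional hypothesis or a separate justification.
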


\bigskip 
\begingroup
\noindent%
\parbox[t]{7.8cm}{\small{\scshape\ignorespaces  Department of Mathematics\\ Faculty of Sciences\\ Gazi University\\ANKARA 06500, TURKEI 
}\par\vskip1ex
\noindent\small{\itshape E-mail address}\/: avanli@gazi.edu.tr\par\vskip4ex}%
\hfill\endgroup
\begingroup
\noindent%
\parbox[t]{7.8cm}{\small{\scshape\ignorespaces Department of Mathematics\\ Faculty of Sciences\\ Gazi University\\ANKARA 06500, TURKEI 
}\par\vskip1ex
\noindent\small{\itshape E-mail address}\/: ramazansr@gmail.com\par\vskip4ex}%
\hfill\endgroup

\end{document}